\title{Noncompact self-shrinkers for mean curvature flow with arbitrary genus}
\author{Reto Buzano, Huy The Nguyen and Mario B. Schulz}
\date{\vspace*{-4ex}}
\providecommand{\R}{\mathbb{R}}
\providecommand{\N}{\mathbb{N}}
\providecommand{\Z}{\mathbb{Z}}
\providecommand{\B}{B}
\providecommand{\Sp}{\mathbb{S}}
\providecommand{\dih}{\mathbb{D}}
\providecommand{\hsd}{\mathscr{H}}
\providecommand{\ess}{\mathrm{e}}
\providecommand{\symdiff}{\mathbin{\triangle}}
\DeclarePairedDelimiter\abs{\lvert}{\rvert}
\DeclarePairedDelimiter\sk{\langle}{\rangle}
\DeclarePairedDelimiter\interval{]}{[}
\DeclarePairedDelimiter\Interval{[}{[}
\DeclareMathOperator{\genus}{genus}
\DeclareMathOperator{\dist}{dist}
\theoremstyle{plain}
\newtheorem{theorem}{Theorem}[section]
\newtheorem{lemma}[theorem]{Lemma}
\newtheorem{corollary}[theorem]{Corollary}
\newtheorem{conjecture}[theorem]{Conjecture}
\theoremstyle{definition}
\newtheorem{definition}[theorem]{Definition}
\theoremstyle{remark}
\renewenvironment{proof}[1][\proofname]{\par
  \pushQED{\qed}%
  \normalfont \topsep0\p@\@plus6\p@\relax
  \trivlist
  \item[\hskip\labelsep
        \itshape
    #1\@addpunct{.}]\ignorespaces
}{%
  \popQED\endtrivlist\@endpefalse
}
\newcommand\printaddress{{
\setlength{\parindent}{15pt}
\setlength{\parskip}{2.5ex}
\small~
\par
{\scshape Reto Buzano}
\newline 
Universit\`a degli Studi di Torino, 
Dipartimento di Matematica,
Via Carlo Alberto 10, 
10123 Torino, Italy 
\newline
\textit{E-mail address:} 
\texttt{reto.buzano@unito.it}
\par
{\scshape Huy The Nguyen}
\newline 
Queen Mary University of London, 
School of Mathematical Sciences, 
Mile End Road, 
London E1 4NS, UK
\newline
\textit{E-mail address:} 
\texttt{h.nguyen@qmul.ac.uk}
\par
{\scshape Mario B. Schulz}
\newline 
University of M\"unster, 
Mathematisches Institut, 
Einsteinstrasse 62,
48149 M\"unster,
Germany
\newline
\textit{E-mail address:} 
\texttt{m.schulz@uni-muenster.de}
\par
}} 
\begin{document}
 
\maketitle

\begin{abstract} 
In his lecture notes on mean curvature flow, Ilmanen conjectured the existence of noncompact self-shrinkers with arbitrary genus. 
Here, we employ min-max techniques to give a rigorous existence proof for these surfaces. 
Conjecturally, the self-shrinkers that we obtain have precisely one (asymptotically conical) end. 
We confirm this for large genus via a precise analysis of the limiting object of sequences of such self-shrinkers for which the genus tends to infinity. 
Finally, we provide numerical evidence for a further family of noncompact self-shrinkers with odd genus and two asymptotically conical ends.
\end{abstract}

\section{Introduction}\label{sec:Intro}

A mean curvature flow starting from a closed embedded surface must necessarily form a singularity in finite time. 
The formation of singularities is therefore one of the central themes in the study of mean curvature flow.
By work of Huisken \cite{Huisken1990}, Ilmanen \cite{Ilmanen1995}, and White \cite{White1994}, such singularities are modelled on surfaces which shrink self-similarly along the flow. 
More precisely, parabolic rescalings around a singularity of mean curvature flow subsequentially converge to a self-shrinker.  

\begin{definition}\label{defn:self-shrinker}
A surface $\Sigma\subset\R^{3}$ with unit normal vector field $\nu$ is called \emph{self-shrinker} for mean curvature flow if one of the following equivalent conditions is satisfied. 
\begin{enumerate}[label={(\roman*)}]
\item\label{defn:self-shrinker-1} The family $\{\Sigma_t\}_{t\in\interval{-\infty,0}}$ of surfaces $\Sigma_t\vcentcolon=\sqrt{-t}\,\Sigma$ is a mean curvature flow, i.\,e.~the normal part of the derivative with respect to $t$ is equal to the mean curvature vector. 
\item\label{defn:self-shrinker-2} The mean curvature $H=\operatorname{div}_{\Sigma}\nu$ of $\Sigma$ satisfies the equation
\begin{align}\label{eqn:self-shrinker}
H=\tfrac{1}{2}\sk{x,\nu}
\end{align}
at each $x\in\Sigma$, where $\sk{\cdot,\cdot}$ denotes the Euclidean scalar product in $\R^3$. 
\item\label{defn:self-shrinker-3} $\Sigma$ is a critical point for the \emph{Gaußian area functional} 
\begin{align}\label{eqn:functional}
F(\Sigma)&\vcentcolon=\frac{1}{4\pi}\int_{\Sigma}e^{-\frac{1}{4}\abs{x}^2}\,d\hsd^{2}(x) 
\end{align}
where $\hsd^2$ denotes the $2$-dimensional Hausdorff measure in $\R^3$.
\item\label{defn:self-shrinker-4} $\Sigma$ is a minimal surface in the (conformally flat) Riemannian manifold $\bigl(\R^3,e^{-\frac{1}{4}\abs{x}^2}g_{\R^3}\bigr)$ called \emph{Gaußian space}.  
\end{enumerate}
\end{definition}

References for the equivalence of \ref{defn:self-shrinker-1}--\ref{defn:self-shrinker-4} are results by  
Huisken~\cite[§\,3]{Huisken1990} and the work of 
Colding and Minicozzi \cite[§\,2--3]{Colding2012a}, \cite[§\,1]{Colding2012}. 

\pagebreak[2]

Noncompact self-shrinkers model local singularities along the mean curvature flow, i.\,e.~situations where the surface does not become extinct at the singular time. 
Therefore, a very natural objective in this setting is to construct examples of complete, embedded self-shrinkers.  
The simplest ones are flat planes, the sphere of radius $2$ and cylinders of radius $\sqrt{2}$, all centred at the origin. 
Brendle~\cite{Brendle2016} proved that there are no other embedded self-shrinkers of genus zero. 
Angenent~\cite{Angenent1992} constructed a rotationally symmetric, closed self-shrinker of genus one. 
Until now, Angenent's torus has been the only known example of an embedded self-shrinker of genus one.  
Based on numerical simulations using Brakke's \cite{Brakke1992} surface evolver, Ilmanen \cite{Ilmanen1998} conjectured the existence of noncompact embedded self-shrinkers with dihedral symmetry, one end and \emph{arbitrary genus}. 
For high genus, these surfaces resemble the union of a sphere and a plane desingularised along the line of intersection. 
Kapouleas, Kleene and M{\o}ller \cite{Kapouleas2018} as well as X.~Nguyen \cite{Nguyen2014} were able to formalise the desingularisation procedure and prove the existence of such self-shrinkers if the genus is sufficiently large. 
However, the nature of the desingularisation method does not allow the construction of low genus examples. 
Our main result establishes the existence of self-shrinkers with arbitrary genus. 

\begin{theorem}\label{thm:main}
For all $g\in\N$ there exists a complete, embedded, noncompact self-shrinker $\Theta_g\subset\R^3$ for mean curvature flow which has genus $g$ and is invariant under the action of the dihedral group~$\dih_{g+1}$.
\end{theorem}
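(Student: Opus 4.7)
My plan is to construct $\Theta_g$ as a complete embedded minimal surface in Gaussian space (Definition~\ref{defn:self-shrinker}\ref{defn:self-shrinker-4}) by means of an equivariant min-max construction, with the dihedral group $\dih_{g+1}$ imposed at the level of the sweepout to prescribe both the symmetry and the genus of the resulting self-shrinker. Since the Gaussian area functional $F$ is not coercive on families of noncompact surfaces, the min-max has to be performed on a compact exhaustion of $\R^3$ by balls $\B_R$, and $\Theta_g$ extracted as a smooth subsequential limit of min-max surfaces $\Theta_g^R\subset\B_R$ as $R\to\infty$.

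The first step is to design, for each $R$ sufficiently large, a one-parameter sweepout $\{\Sigma_s^R\}_{s\in[0,1]}$ of $\dih_{g+1}$-invariant surfaces in $\B_R$ whose generic slice has genus $g$. Geometrically each $\Sigma_s^R$ should be modelled on the desingularisation of the sphere of radius $2$ and a plane through the origin along their circle of intersection, with $g+1$ necks distributed symmetrically around this circle as suggested by Ilmanen's numerical experiments; the parameter $s$ is then used to degenerate this configuration to two topologically trivial endpoints of small Gaussian area. The nontrivial quantitative input here is a sharp area estimate showing that $\sup_{s}F(\Sigma_s^R)$ is bounded, uniformly in $R$, by an explicit constant that lies strictly below both the Gaussian area of the Angenent torus and of two round spheres, thereby preventing these configurations from arising as the min-max limit.

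Next, I would apply the equivariant Simon--Smith min-max theorem (in the form developed by Ketover for group actions whose fixed set has codimension at least one) in the Gaussian metric on $\B_R$, saturating the sweepout by $\dih_{g+1}$-equivariant isotopies. This yields a smooth embedded $\dih_{g+1}$-invariant minimal surface $\Theta_g^R$ in Gaussian space whose Gaussian area equals the equivariant width; Ketover's equivariant genus bound, combined with the topological nontriviality of the sweepout, forces the genus of $\Theta_g^R$ to equal $g$. Uniform area and curvature estimates intrinsic to the min-max output, together with smooth compactness of self-shrinkers with bounded area and genus on compact subsets, then allow one to extract a subsequential smooth limit $\Theta_g$ as $R\to\infty$ which is a complete, embedded, $\dih_{g+1}$-invariant self-shrinker.

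The main technical obstacle is to rule out all degenerate limits and to confirm that $\Theta_g$ is genuinely noncompact. Collapse onto the plane, the sphere of radius $2$, or a cylinder is excluded by Brendle's classification \cite{Brendle2016} of genus-zero embedded self-shrinkers combined with lower semicontinuity of genus, while convergence to Angenent's torus is excluded by the quantitative width bound built into the sweepout. Neck formation and any loss of genus under the passage to the min-max limit must be prevented by the equivariant genus bound. Finally, noncompactness of $\Theta_g$ is forced because the sweepout endpoints intersect $\partial\B_R$ in nondegenerate $\dih_{g+1}$-symmetric configurations, so that $\Theta_g^R$ carries a uniformly nontrivial amount of area near $\partial\B_R$ which, in the limit $R\to\infty$, produces the noncompact end conjectured by Ilmanen.
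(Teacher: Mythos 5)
Your proposal follows the same broad strategy (equivariant Simon--Smith min-max for the Gaussian area functional with a desingularised sphere-plus-plane sweepout), but it diverges from the paper in a way that creates unaddressed difficulties, and two of its key steps have genuine gaps. First, the compact-exhaustion setup is not needed and is not what makes the problem tractable: since the total Gaussian area of $\R^3$ is finite, the min-max of \cite{Ketover2018,Ketoverc} is applied \emph{directly} to a $\dih_{g+1}$-sweepout of all of $\R^3$ (Lemma~\ref{lem:sweepout}), and noncompactness of the limit is immediate because the equivariant min-max surface contains the unbounded horizontal axes $\xi_1,\dots,\xi_{g+1}$ (the fixed-point sets of the order-two rotations), not because of any persistence of area near $\partial\B_R$ -- your argument that $\Theta_g^R$ ``carries a uniformly nontrivial amount of area near $\partial\B_R$'' does not follow from the min-max construction, and your scheme would additionally require uniform curvature/area estimates, control of the free boundary, and preservation of genus and of a nontrivial limit as $R\to\infty$, none of which you supply. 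Second, you never prove the mountain-pass inequality: asserting ``topological nontriviality of the sweepout'' is not an argument that the width strictly exceeds the endpoint areas. The paper's mechanism is quantitative: the enclosed sets of any competitor sweepout are $\dih_{g+1}$-equivariant with Gaussian measure $\tfrac12$ (Lemma~\ref{lem:perimeter}), and the stability of the Gaussian isoperimetric inequality in the equivariant class (Lemma~\ref{lem:isoperimetric}) forces any sweepout with all slices of area $<1+\delta$ to stay measure-close to a single equivariant half-space, contradicting that the family joins the upper to the lower half-space; this is what yields $W>1$ and, together with the upper bound $1+\tfrac4e<3$ and odd multiplicity, multiplicity one.

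The second, more serious gap is the genus lower bound. ``Lower semicontinuity of genus'' is false for the varifold convergence produced by min-max (genus can be lost in necks), and Ketover's equivariant genus bound gives only $\genus(\Theta_g)\le g$, so neither ingredient ``prevents loss of genus'' as you claim. The paper needs two additional ideas here: (a) genus zero is excluded because $\Theta_g$ contains the origin and is not the plane (since $F(\Theta_g)=W>1$), so Brendle's classification \cite{Brendle2016} applies; and (b) once $\genus(\Theta_g)\ge1$, a Riemann--Hurwitz-type structure result for $C_{g+1}$-equivariant surfaces (Lemma~\ref{lem:genus} and Corollary~\ref{cor:genus}) shows that a connected, properly embedded, equivariant surface through the origin with genus in $\{1,\dots,g\}$ must have genus exactly $g$; this is applied to $\Theta_g\cap\B_{r_g}$, where the existence of a ball capturing all the topology rests on L.~Wang's asymptotic description of the ends \cite{Wang2016} (Corollary~\ref{cor:connected}). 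Without an argument of this type, your construction could a priori produce a surface of any genus between $0$ and $g$ (for instance a plane or a lower-genus shrinker), and the comparison with the areas of the Angenent torus or of two spheres does not repair this, since those comparisons control multiplicity and certain limits but not intermediate genus.
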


In particular, $\Theta_1$ is the first example of a \emph{noncompact} self-shrinker with genus one and 
$\Theta_2$ is the very first example of a self-shrinker with genus two. 
Based on the numerical evidence presented in Section~\ref{sec:visualisation}, we conjecture that $\Theta_1$ has less Gaußian area than Angenent's torus. 
We also conjecture that $\Theta_1$ has Morse index equal to $5$. 

By a result of L.~Wang \cite{Wang2016}, a complete, embedded self-shrinker can only have asymptotically conical or cylindrical ends.
By construction, the self-shrinkers $\Theta_g$ will have at least one asymptotically conical end and we conjecture that this is the only end of $\Theta_g$. 
The question whether or not $\Theta_g$ may have additional asymptotically \emph{cylindrical} ends is closely related to another conjecture of Ilmanen \cite[Lecture 3]{Ilmanen1998} stating that if a self-shrinker has an asymptotically cylindrical end then it must be isometric to the self-shrinking cylinder. 
A partial affirmative answer to this conjecture was given by L.~Wang \cite[Theorem~1.1]{Wang2016a} but even a full resolution would in theory still allow $\Theta_g$ to have additional asymptotically conical ends.
By analysing the behaviour for $g\to\infty$ we are able to rule out any additional ends for $\Theta_g$ if the genus $g$ is sufficiently large. 

\begin{theorem}\label{thm:high_genus} 
The sequence of self-shrinkers $\{\Theta_g\}_{g\in\N}$ constructed in Theorem \ref{thm:main} converges to the union of the horizontal plane and the self-shrinking sphere in the sense of varifolds as $g\to\infty$. 
The convergence is locally smooth away from the intersection circle. 
In particular, if $g$ is sufficiently large, then $\Theta_g$ has exactly one asymptotically conical end. 
\end{theorem}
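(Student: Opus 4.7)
The plan is a varifold compactness--classification--regularity argument. Let $\Pi$ denote the horizontal plane through the origin and $S^{2}_{2}$ the self-shrinking sphere of radius $2$. The sweepout used in the min-max construction of Theorem \ref{thm:main} is modelled on Ilmanen's desingularisation picture, so that its maximal $F$-value does not exceed $F(\Pi)+F(S^{2}_{2})=1+4/e$, giving the uniform bound $F(\Theta_{g})\le 1+4/e$ independently of $g$. Ilmanen's compactness theorem for integral $F$-stationary varifolds with bounded Gau\ss ian area then produces a subsequence of $\{\Theta_{g}\}$ that converges as varifolds to an integer rectifiable, $F$-stationary varifold $V\subset\R^{3}$. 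The rotational subgroup of $\dih_{g+1}$ consists of rotations by multiples of $2\pi/(g+1)$ about the vertical symmetry axis, and these rotations become dense in $SO(2)$ as $g\to\infty$, so $V$ is invariant under the full circle action.

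Next I would identify $V=[\Pi]+[S^{2}_{2}]$ with multiplicity one. By the classification of rotationally symmetric, complete, embedded self-shrinkers in $\R^{3}$, the support of $V$ must be a union chosen among the horizontal plane, the self-shrinking cylinder of radius $\sqrt{2}$, the sphere $S^{2}_{2}$, and the Angenent torus. Combining the $F$-area upper bound, a matching lower bound (the min-max width approaches $1+4/e$ as $g\to\infty$), and the linking structure of the sweepout which nontrivially couples $\Pi$ and $S^{2}_{2}$ excludes every alternative, including higher-multiplicity variants such as $2[\Pi]$ or a cylinder-plus-plane configuration. Once $V$ is identified with multiplicity one, Allard's regularity theorem for $F$-stationary varifolds upgrades varifold convergence to smooth convergence on compact subsets of $\R^{3}\setminus\Gamma$, where $\Gamma\vcentcolon=\Pi\cap S^{2}_{2}$ is the equatorial circle of radius $2$, near which the $g$ handles realising the genus of $\Theta_{g}$ necessarily concentrate.

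For the statement about ends, fix $R>2$ so that $S^{2}_{2}\subset\B_{R}$. For $g$ sufficiently large, smooth convergence on $\R^{3}\setminus\B_{R}$ implies that $\Theta_{g}\setminus\B_{R}$ is a smooth surface $C^{\infty}$-close to the annular region $\Pi\setminus\B_{R}$, hence connected. Thus $\Theta_{g}$ has exactly one end. L.~Wang's dichotomy forces this end to be either asymptotically conical or cylindrical, and since the rescalings approach the plane $\Pi$ rather than any cylinder of radius $\sqrt{2}$, the end is asymptotically conical.

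The main obstacle is the identification step: beyond the area upper bound, one must either match the min-max width to $F(\Pi)+F(S^{2}_{2})$ in the limit $g\to\infty$ or exploit explicit features of the sweepout to exclude competing rotationally symmetric configurations, while simultaneously ruling out higher multiplicities by leveraging the topological constraint that the additional sheeting needed to carry extra mass would force handles to accumulate away from $\Gamma$, contradicting the genus-$g$ structure of the sequence.
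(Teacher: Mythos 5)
Your overall skeleton (uniform bound $F(\Theta_g)\le 1+\frac{4}{e}$, varifold compactness, rotational invariance of the limit, identification, then smooth convergence away from the intersection circle) matches the paper's strategy at the coarsest level, but the step you yourself flag as ``the main obstacle'' is precisely where the proposal has a genuine gap, and the mechanisms you suggest for closing it would not work as stated. The limit $V$ is a priori only a stationary \emph{integral varifold} in Gaußian space; it is not a union of smooth, complete, embedded rotationally symmetric self-shrinkers, so the Kleene--M{\o}ller type classification you invoke does not apply to its support. Indeed the actual limit, plane $\cup$ sphere, is itself singular along a circle, and a priori the limit could also have junction-type singular circles, higher multiplicity, or pieces that do not extend to complete smooth shrinkers; decomposing $V$ into smooth pieces is exactly what has to be proved. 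Your two proposed tools for the identification are also unsubstantiated: the claim that the min-max width tends to $1+\frac{4}{e}$ is nowhere established (the paper only has $1+\delta\le W\le 1+\frac{4}{e}$), and ``the linking structure of the sweepout'' does not survive the limit $g\to\infty$ in any form you have made precise. Likewise, invoking Allard regularity to get smooth convergence away from $\Pi\cap S^2_2$ presupposes multiplicity one and smoothness of the limit there, which are part of what must be shown.

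The paper closes this gap by a genus-localisation argument that your proposal only gestures at: the plane lies in the limit because the axes $\xi_1,\dots,\xi_{g+1}$ lie on $\Theta_g$ and densify; the curvature concentration set is controlled via Choi--Schoen $\varepsilon$-regularity together with Ilmanen's localised Gauß--Bonnet estimate, and the equivariant structure result (Lemma~\ref{lem:genus}) forces this set to consist of at most one circle in $P$ plus a discrete subset of the axis, since two concentration circles (or one off the plane) would force $\genus(\Theta_g)\ge 2g$; White's theorem handles the axis; Brakke's gap theorem (with the oddness of the multiplicity coming from the dihedral symmetry and the bound $<3$ from the area estimate) rules out the plane alone; a blow-up/balancing argument along the circle shows the limit is $P$ union a surface meeting $P$ orthogonally, hence (after removing the singularity) a smooth self-shrinker $S$; and finally Brendle's genus-zero classification plus the sharp bound $1+\frac{4}{e}$ identify $S$ as the sphere rather than the cylinder. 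None of these steps is replaced by anything of comparable strength in your proposal. A smaller but real issue: your end-count argument uses smooth convergence ``on $\R^3\setminus\B_R$'', whereas the convergence obtained is only \emph{locally} smooth, so for fixed large $g$ you cannot directly conclude that $\Theta_g\setminus\B_R$ is a graph over $\Pi\setminus\B_R$; controlling the ends requires the structure theorem of L.~Wang/Sun--Wang (as in Corollary~\ref{cor:connected}) in combination with the locally smooth convergence.
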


We will contrast this result by providing examples of numerically obtained self-shrinkers with odd genus and \emph{two} asymptotically conical ends (see Section \ref{sec:visualisation}, Figure \ref{fig:2ends}). 
Based on this numerical evidence, we propose the following new conjecture.

\begin{conjecture}\label{conj:twoends}
For each sufficiently large $n\in\N$ there exists a complete, embedded, noncompact self-shrinker $\Phi_g\subset\R^3$ for mean curvature flow which has genus $g=2n-1$, is invariant under the action of the dihedral group $\dih_{n}$ and has two asymptotically conical ends.
These surfaces converge (locally) to the union of the self-shrinking cylinder and the sphere as $n\to\infty$. 
\end{conjecture}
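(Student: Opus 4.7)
My plan is to construct $\Phi_g$ by an equivariant min-max procedure in Gaußian space, paralleling the construction of the one-ended family $\Theta_g$ from Theorem \ref{thm:main} but with the background barrier configuration \emph{sphere $\cup$ plane} replaced by the noncompact two-ended configuration \emph{sphere $\cup$ cylinder}. The heuristic is that desingularising the self-shrinking sphere $2\Sp^{2}$ and the self-shrinking cylinder of radius $\sqrt{2}$ along each of their two circles of intersection, with $n$-fold rotational symmetry imposed, produces $2n$ small catenoidal necks; an Euler-characteristic count (sphere with $2n$ disks removed, glued by $2n$ thin annuli to the cylinder with $2n$ disks removed) then gives the required genus $2n-1$ and two noncompact ends, matching the statement.

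The first step is to construct, for every sufficiently large $n$, a $\dih_{n}$-equivariant sweepout $\{\Sigma_t\}_{t\in[0,1]}$ of $\R^3$ that detects a nontrivial class in an appropriate equivariant cycle space and whose slices have controlled Gaußian area. The design constraint is
\begin{align*}
\sup_{t\in[0,1]} F(\Sigma_t) < F\bigl(2\Sp^{2}\bigr)+F(\mathrm{cylinder}),
\end{align*}
which prevents the min-max limit from acquiring extra multiplicity or a hidden copy of either building block. Concretely one may graft $n$-periodic Scherk-type desingularisations along each intersection circle, cap off with the sphere and the cylinder as smooth templates, and homotope through this model by opening and closing the necks, using equivariant cut-off profiles to keep the Gaußian area just below the barrier.

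Running the equivariant Simon--Smith / Ketover min-max scheme in the Gaußian metric then yields a complete, embedded, $\dih_{n}$-invariant self-shrinker $\Phi_g$. Ketover-type genus bounds along the min-max sequence combine with the symmetry to give $\genus(\Phi_g)\leq 2n-1$, and preventing necks from pinching off should upgrade this to equality. By L.~Wang's classification \cite{Wang2016}, every end of $\Phi_g$ is asymptotically conical or cylindrical, and the sweepout forces at least two distinct noncompact ends. Any asymptotically cylindrical end would have to be coaxial with the symmetry axis, so the partial resolution \cite{Wang2016a} of Ilmanen's cylindrical conjecture -- or a direct symmetry argument -- would force such a $\Phi_g$ to coincide with the shrinking cylinder, contradicting the genus. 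The varifold convergence as $n\to\infty$ then follows, exactly as in Theorem \ref{thm:high_genus}, from the uniform bound on $F(\Phi_g)$ together with smooth local compactness for self-shrinkers away from the intersection circles.

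The hardest step I expect is the simultaneous control of genus and number of ends. The Gaußian-area budget $F(2\Sp^{2})+F(\mathrm{cylinder})$ is tight: a priori a min-max sequence could degenerate to the cylinder alone (losing all genus), to the sphere alone (losing the ends), or to a one-ended $\Theta_g$-type surface already furnished by Theorem \ref{thm:main} with strictly smaller Gaußian area. Ruling out each of these alternatives simultaneously requires a genuinely equivariant topological lower bound sharply distinguishing the two-ended from the one-ended class, possibly beyond what the standard min-max machinery currently provides. This is essentially why the statement is posed only as a conjecture, and its full resolution is likely to depend on sharper versions of the Ilmanen and L.~Wang uniqueness statements for cylindrical ends together with a well-chosen parameter family in the equivariant Almgren--Pitts framework.
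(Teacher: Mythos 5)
This statement is not proven in the paper at all: it is posed as a conjecture, supported only by numerical simulations (Section~\ref{sec:visualisation}, Figure~\ref{fig:2ends}), so there is no proof of the authors' to compare yours against. Your proposal is a reasonable roadmap -- and it mirrors the strategy the paper itself hints at and then explicitly flags as problematic ("it would be especially delicate to control the topology of the resulting surface") -- but as written it is a strategy outline, not a proof, and the gaps you acknowledge at the end are exactly the substantive ones. Concretely: (1) the mountain-pass lower bound is not supplied. For $\Theta_g$ the paper gets nontriviality of the sweepout from the stability of the Gaußian isoperimetric inequality (Lemmas~\ref{lem:isoperimetric}--\ref{lem:width}), which only separates the width from the \emph{plane}; to produce a two-ended surface you would need a lower bound strictly above the Gaußian areas of the cylinder, the sphere, and indeed of the one-ended competitors $\Theta_g$, and no mechanism for such a bound is proposed. (2) The genus pinning fails in this symmetry class. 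The paper forces $\genus(\Theta_g)=g$ because, by Lemma~\ref{lem:genus}, under $C_{g+1}$-equivariance the only genus in $\{1,\dots,g\}$ compatible with meeting the axis is $g$ itself. Under $\dih_n$-equivariance the same Riemann--Hurwitz count allows, besides $2n-1$, also genus $n-1$ (quotient genus zero, four axis crossings), so a min-max limit could drop to a one-ended $\Theta_{n-1}$-type surface without violating equivariance or the axis structure; "preventing necks from pinching off" is precisely the missing argument, not a routine upgrade. (3) Ruling out cylindrical ends does not follow from the cited results: \cite{Wang2016a} is only a partial resolution of Ilmanen's conjecture, and the paper notes that even a full resolution would not by itself control the number or type of ends; moreover the conjectured surfaces are supposed to shadow the self-shrinking cylinder for large $n$, so distinguishing asymptotically conical from cylindrical behaviour along those ends is exactly the analytic difficulty X.~Nguyen points to, and it is untouched here. (4) Multiplicity one and the claimed sharp area bound for the model sweepout (sum of the entropies of sphere and cylinder, which is barely below $3$) are asserted, not established, and the existing min-max machinery in Gaußian space would also need to be checked for sweepouts whose slices are noncompact with cylindrical ends.

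In short, your outline is consistent with how one would hope to attack the conjecture, but each of the steps that would turn the heuristic desingularisation picture into a theorem -- the width lower bound above all degenerate configurations, the equivariant topological control singling out genus $2n-1$ and two ends, and the exclusion of cylindrical ends -- is missing, which is precisely why the paper leaves the statement as Conjecture~\ref{conj:twoends}.
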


Conjecture~\ref{conj:twoends} suggests that the union of a self-shrinking cylinder and a sphere can be desingularised to an embedded self-shrinker with two ends. 
In Section \ref{sec:visualisation} we compare these numerically obtained self-shrinkers with visualisations of M\o{}ller's \cite{Moeller2014} desingularisation of Angenent's torus and a sphere. 

Based on numerical approximations, Chopp \cite[§\,4.2]{Chopp1994} conjectured that the doubling of any regular polyhedron joined by  necks at the center of each face is isotopic to a closed self-shrinker. 
Later, Ketover \cite{Ketoverc} employed equivariant variational methods to obtain self-shrinkers with tetrahedral, octahedral and icosahedral symmetry, but it remains open whether these surfaces are actually closed or whether they may have additional noncompact ends.

The article is organised as follows.
Section \ref{sec:existence} contains the construction of the surfaces $\Theta_g$ and the proof of Theorem~\ref{thm:main} using equivariant min-max theory. 
In Section \ref{sec:high_genus} we analyse the behaviour as $g\to\infty$ and prove Theorem~\ref{thm:high_genus}.  
Finally, in Section \ref{sec:visualisation}, we reproduce Ilmanen's numerical simulations and include new images visualising the symmetry and geometry of our surfaces $\Theta_g$ and we also provide images of the family of self-shrinkers motivating Conjecture \ref{conj:twoends}.

The min-max theory in Gaußian space was developed by Ketover and Zhou \cite{Ketover2018} based on the pioneering work by Almgren and Pitts, and later refinements by Simon--Smith and Colding--De Lellis (cf.~\cite{Colding2003}). 
The equivariant version of min-max theory was introduced by Ketover \cite{Ketover,Ketovera,Ketoverc}. His work \cite{Ketovera} on free-boundary minimal surfaces in the Euclidean unit ball contains the remark that equivariant min-max methods can be used to construct the self-shrinkers discovered in \cite{Kapouleas2018,Nguyen2014}, however no further details are given. 
Min-max methods rely on the the choice of an effective sweepout and our sweepout described in Section~\ref{subsec:sweepout} is designed to allow good control on the Gaußian (rather than the Euclidean) area. 
In general, the convergence of a min-max sequence is only obtained in the sense of varifolds and additional work is required to control the topology of the limit surface. 
Our argument to determine the genus differs from the approach described in \cite{Ketovera} and in particular relies on our Lemma \ref{lem:genus} about the structure of arbitrary closed, equivariant surfaces, allowing a generalisation of \cite[Lemma B.1]{Carlotto2020} used in recent work of the third author in collaboration with Carlotto and Franz.
Moreover, proving the behaviour for high genus as stated in Theorem~\ref{thm:high_genus} is rather delicate and we rely again on a careful application of Lemma~\ref{lem:genus}. 

\paragraph{Acknowledgements.}  
The authors would like to thank Felix Schulze for interesting discussions about asymptotically conical self-shrinkers. Moreover, they thank the referee for valuable comments and suggestions.
The research was funded by the EPSRC grant EP/S012907/1 and 
M.\,S. was partly funded by the Deutsche Forschungsgemeinschaft (DFG, German Research Foundation) under Germany's Excellence Strategy EXC 2044 -- 390685587, Mathematics M\"unster: Dynamics--Geometry--Structure, and the Collaborative Research Centre CRC 1442, Geometry: Deformations and Rigidity.

\pagebreak[1] 
 
\section{Existence of noncompact self-shrinkers with given genus}
\label{sec:existence}

In this section, we employ an equivariant min-max procedure to construct the self-shrinkers $\Theta_g\subset\R^3$ for any given $g\in\N$ as stated in Theorem \ref{thm:main}. 
Given $2\leq n\in\N$, the \emph{dihedral group} $\dih_{n}$ of order $2n$ is defined to be the discrete subgroup of Euclidean isometries acting on $\R^3$ generated by the rotations $\psi_\ell\colon\R^3\to\R^3$ of angle $\pi$ around the $n$ horizontal axes 
\begin{align}\label{eqn:axes}
\xi_\ell\vcentcolon=\{(r\cos\tfrac{\ell\pi}{n},r\sin\tfrac{\ell\pi}{n},0)\mid r\in\R\}
\end{align}
for $\ell\in\{1,\ldots,n\}$. 
In particular, $\dih_{n}$ contains the composition $\psi_{2}\circ\psi_{1}$ which acts as a rotation by angle $\frac{2\pi}{n}$ around the vertical axis 
\begin{align}\label{eqn:axis0}
\xi_0\vcentcolon=\{(0,0,s)\mid s\in\R\}.
\end{align}
It will be clear from the proof of Theorem \ref{thm:main} that the self-shrinker $\Theta_g$ in question contains the horizontal axes $\xi_1,\ldots,\xi_{g+1}$ and intersects the vertical axis $\xi_0$ orthogonally. 
By construction, we expect that $\Theta_g$ is actually invariant under the action of the anti-prismatic symmetry group of order $4(g+1)$, meaning in particular that in addition to the invariance under the dihedral group $\dih_{g+1}$ of order $2(g+1)$ it is also invariant under reflections with respect to any plane spanned by $\xi_0$ and $\xi_{\ell+\frac{1}{2}}$ as defined in \eqref{eqn:axis0} and \eqref{eqn:axes}, but in our proofs it is more convenient to work with the subgroup of rotations.

\subsection{Construction of the sweepout}
\label{subsec:sweepout}

In the following, we recall the definitions of equivariant sweepouts, isotopies, and saturations, as well as the notion of min-max width (cf.~\cite{Ketover2018,Ketover,Ketoverc} and \cite{Carlotto2020,Franz2021}) that are needed for the equivariant min-max construction.

\begin{definition}
Let $\dih_n$ be the dihedral group for some $2\leq n\in\N$.
A family $\{\Sigma_t\}_{t\in[0,1]}$ of surfaces $\Sigma_t\subset\R^3$ with the following properties is called $\dih_n$\emph{-sweepout} of $\R^3$. 
\begin{enumerate}[label={\normalfont(\roman*)}] 
\item For all $t\in\interval{0,1}$ the set $\Sigma_t\subset\R^3$ is a smooth, embedded surface without boundary. 
\item $\Sigma_t$ varies locally smoothly for $t\in\interval{0,1}$, and continuously, in the sense of varifolds, for $t\in[0,1]$. 
\item Every $\Sigma_t$ is $\dih_n$-equivariant, i.\,e. $\varphi(\Sigma_t)=\Sigma_t$ for all $\varphi\in \dih_n$ and all $t\in[0,1]$.  
\end{enumerate}
Given $2\leq n\in\N$, a smooth function $\Phi\colon[0,1]\times\R^3\to\R^3$ is called $\dih_{n}$\emph{-isotopy} if $\Phi(t,\cdot)$ is a diffeomorphism of $\R^3$ for all $t\in[0,1]$, $\Phi(0,\cdot)$ and $\Phi(1,\cdot)$ coincide with the identity map in $\R^3$, and $\Phi(t,\varphi(\cdot))=\varphi(\Phi(t,\cdot))$ for all $t\in[0,1]$ and every $\varphi\in\dih_{n}$. 

Given a $\dih_{n}$-sweepout $\{\Sigma_t\}_{t\in[0,1]}$ of $\R^3$ we define the \emph{Gaußian min-max width} $W$ of its $\dih_n$\emph{-saturation} $\Pi\vcentcolon=\{\{\Phi(t,\Sigma_t)\}_{t\in[0,1]}\mid \Phi\text{ is a $\dih_n$-isotopy}\}$ as
\begin{align}
W&\vcentcolon=\adjustlimits\inf_{\{\Lambda_t\}\in \Pi~}\sup_{t\in[0,1]}F({\Lambda_t}).
\end{align}
\end{definition}

We construct a sweepout with the right topology and symmetry and with control on its Gaußian area. 
The idea is to sweep out $\R^3$ by rescalings of a concentric sphere desingularised with a horizontal plane. 
This differs from the construction in \cite[Lemma~2.2]{Carlotto2020} which is based on an equivariant gluing of three parallel discs through suitably controlled ribbons although both sweepouts have the same symmetry and very similar topology. 

\begin{lemma}\label{lem:sweepout}
Given any $1\leq g\in\N$ there exists a $\dih_{g+1}$-sweepout $\{\Sigma_t\}_{t\in[0,1]}$ of $\R^3$ such that 
\begin{itemize}[nosep]
\item $\Sigma_t$ has genus $g$ and one end for every $0<t<1$; 
\item $\Sigma_t$ contains the horizontal axes $\xi_1,\ldots,\xi_{g+1}$ defined in \eqref{eqn:axes} for every $t\in[0,1]$; 
\item $F(\Sigma_t)\leq 1+\frac{4}{e}+\frac{1}{50}$ for every $t\in[0,1]$, recalling the Gaußian area functional $F$ from \eqref{eqn:functional}. 
\end{itemize}
\end{lemma}
  
\begin{proof}	
For all $\ell\in\{1,\ldots,2g+2\}$ we consider the sets 
\begin{align*}
\sigma_\ell&\vcentcolon=\{(r\cos\alpha,r\sin\alpha,0)\mid \tfrac{(\ell -1)\pi}{g+1}\leq\alpha\leq\tfrac{\ell \pi}{g+1},~0\leq r<1\}, 
\\
\varsigma_\ell&\vcentcolon=\{(r\cos\alpha,r\sin\alpha,0)\mid \tfrac{\ell \pi}{g+1}\leq\alpha\leq\tfrac{(\ell +1)\pi}{g+1},~r>1\}
\end{align*}
shown in Figure \ref{fig:sectors} and the upper half sphere $\Sp^{+}\vcentcolon=\{x=(x_1,x_2,x_3)\in\R^3\mid x_3\geq0,~\abs{x}=1\}$.
\begin{figure}\centering
\pgfmathsetmacro{\genus}{3}
\begin{tikzpicture}[line cap=round,line join=round,baseline={(0,0)}]
\pgfmathsetmacro{\n}{\genus+1}
\pgfmathsetmacro{\m}{2*\genus+2}
\pgfmathsetmacro{\xmax}{\textwidth/2cm}
\clip(-\xmax,-3)rectangle(\xmax,3);
\foreach \k in {1,...,\n}{
\fill[black!40](0,0)--({\k*360/\n}:2)arc({\k*360/\n}:{(\k+1/2)*360/\n}:2)--cycle;
\fill[black!10](0,0)--({\k*360/\n}:2)arc({\k*360/\n}:{(\k-1/2)*360/\n}:2)--cycle;
\fill[black!40]({(\k-1/2)*360/\n}:2)arc({(\k-1/2)*360/\n}:{\k*360/\n}:2)--({\k*360/\n}:2*\xmax)arc({\k*360/\n}:{(\k-1/2)*360/\n}:2*\xmax);
\fill[black!10]({(\k+1/2)*360/\n}:2)arc({(\k+1/2)*360/\n}:{\k*360/\n}:2)--({\k*360/\n}:2*\xmax)arc({\k*360/\n}:{(\k+1/2)*360/\n}:2*\xmax);
}
\foreach \k in {1,...,\m}{
\draw({(\k-1/2)*180/\n}:1.33)node[]{$\sigma_{\k}$};
\draw({(\k+1/2)*180/\n}:2.66)node[]{$\varsigma_{\k}$};
}
\end{tikzpicture}
\caption{Sectors in the plane for the case $g=\genus$.}%
\label{fig:sectors}%
\end{figure}
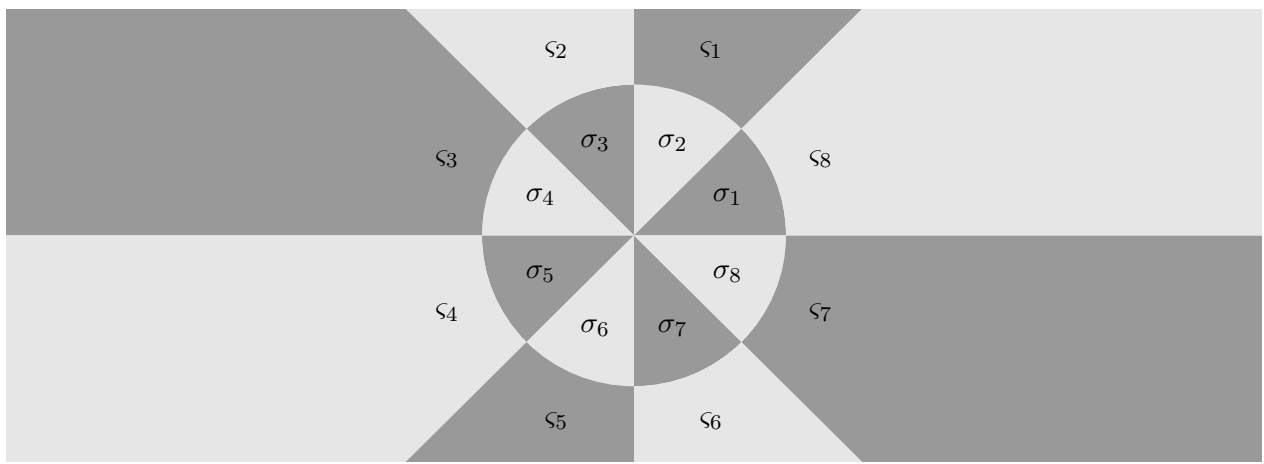
For every even $\ell$ we glue $\sigma_\ell $ and $\varsigma_\ell $ to $\Sp^+$, that is 
\begin{align*} 
\Sigma^+&\vcentcolon=\bigcup_{\ell \text{ even}}(\sigma_\ell \cup\varsigma_\ell \cup\Sp^+).
\shortintertext{For every $0<t<1$ we then set }
\Sigma_t^+&\vcentcolon=\frac{t}{1-t}\Sigma^+. 
\end{align*}
The Gaußian area of $\Sigma_t^+$ is given by half the sum of the Gaußian areas of the sphere $\Sp_r$ of radius $r=\frac{t}{1-t}$ and the plane. 
Note that the function $r\mapsto F(\Sp_r)=r^2e^{-\frac{1}{4}r^2}$ has a global maximum at $r=2$. 
Hence, for every $0<t<1$,  
\begin{align*}
2F(\Sigma_t^+)\leq 1+F(\Sp_2)=1+\tfrac{4}{e}. 
\end{align*}
There exists a $\Z_{g+1}$-equivariant pertubation $\tilde\Sigma_t^+$ of $\Sigma_t^+$, depending smoothly on $0<t<1$, such that   
\begin{itemize}[nosep]
\item the interior of $\tilde\Sigma_t^+$ is smooth, embedded and contained in the open half space $\{x_3>0\}$; 
\item the intersection of $\tilde\Sigma_t^+$ with $\{x_3=0\}$ coincides with the union $\xi_1\cup\ldots\cup\xi_{g+1}$;
\item $F(\tilde\Sigma_t^+)<F(\Sigma_t^+)+1/100$ for every $0<t<1$; 
\item $\tilde\Sigma_t^+$ and $\Sigma_t^+$ have the same limit (supported in $\{x_3=0\}$) as $t\to0$ respectively as $t\to1$.
\end{itemize}
Let $\tilde\Sigma_t^-$ be the rotation of $\tilde\Sigma_t^+$ by angle $\pi$ around $\xi_1$.  
By construction, 
\(
\Sigma_t\vcentcolon= \tilde\Sigma_t^+\cup\tilde\Sigma_t^- 
\) 
is a smooth, $\dih_{g+1}$-equivariant surface with the desired properties. 
Finally, we set $\Sigma_0$ and $\Sigma_1$ to be equal to the horizontal plane. 
\end{proof}

\subsection{Min-max procedure}\label{subsec:min-max}

Given $1\leq g\in\N$ let $\{\Sigma_t\}_{t\in[0,1]}$ be the $\dih_{g+1}$-sweepout constructed in Lemma \ref{lem:sweepout} and let $W$ be the width of its $\dih_{g+1}$-saturation $\Pi$. 
The goal is to show that the sweepout is nontrivial in the sense that $W>\max\{F(\Sigma_0),F(\Sigma_1)\}=1$. 
The argument is based on the isoperimetric inequality in Gaußian space which is formulated in terms of finite perimeter sets. 
Given any Lebesgue measurable set $E\subseteq\R^3$, we define its \emph{Gaußian measure} $\Gamma(E)$ and its \emph{Gaußian perimeter} $P(E)$ by 
\begin{align}\label{eqn:perimeter}
\Gamma(E)&\vcentcolon=(4\pi)^{-\frac{3}{2}}\int_{E}e^{-\frac{1}{4}\abs{x}^2}\,dx, & 
P(E)\vcentcolon=F(\partial^{\ess}E)=\frac{1}{4\pi}\int_{\partial^{\ess}E}e^{-\frac{1}{4}\abs{x}^2}\,d\hsd^2(x), 
\end{align}
where $\partial^{\ess}E$ denotes the \emph{essential boundary} of $E$ consisting of those points of $\R^3$ where the density of $E$ is neither $0$ nor $1$ (see \cite[§\,2]{Cianchi2011}). 
The normalisation in \eqref{eqn:perimeter} is chosen such that the upper half-space $\R^3_+=\{(x_1,x_2,x_3)\in\R^3\mid x_3>0\}$ satisfies $\Gamma(\R^3_+)=\frac{1}{2}$ and $P(\R^3_+)=1$. 
Note that \eqref{eqn:perimeter} differs from the definitions in \cite{Cianchi2011} by scaling $x\mapsto\sqrt{2}x$. 
This however does not affect the validity of the 
Gaußian isoperimetric inequality \cite{Sudakov1974,Borell1975} (see also \cite[Theorem 20]{Ros2005})  which states that among all subsets $E\subset\R^3$ with prescribed Gaußian measure $\Gamma(E)=\gamma\in[0,1]$, half-spaces, i.\,e.~sets of the form $\{x\in\R^3\mid\sk{x,\nu}>s\}$ for some $\nu\in\Sp^2$ and $s\in\R$, have the least Gaußian perimeter.

A set $E\subset\R^3$ is called $\dih_{n}$\emph{-equivariant} for some $2\leq n\in\N$ if, for all $\varphi\in \dih_{n}$, the set $\varphi(E)$ coincides either with $E$ or with $\R^3\setminus E$ up to a negligible set (cf.~\cite[Definition 3.3]{Carlotto2020}). 
The following statement is a special case of \cite[Lemma 4.7]{Cianchi2011}. 
It uses the common notation $E\symdiff L\vcentcolon=(E\setminus L)\cup(L\setminus E)$ for the symmetric difference of sets. 

\begin{lemma}[Stability of the Gaußian isoperimetric inequality]\label{lem:isoperimetric}
Fix $2\leq n\in\N$. 
For every $\varepsilon>0$ there exists $\delta>0$ such that if $E\subset\R^3$ is a $\dih_{n}$-equivariant set with Gaußian measure $\Gamma(E)=\frac{1}{2}$ and perimeter $P(E)<1+\delta$ then there exists a $\dih_{n}$-equivariant half-space $L\subset\R^3$ such that $\Gamma(E\symdiff L)<\varepsilon$.   
\end{lemma}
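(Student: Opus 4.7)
The plan is to argue by compactness and contradiction, which bypasses any quantitative stability estimate. Fix $\varepsilon>0$ and suppose, toward a contradiction, that no $\delta$ as asserted exists. Then one finds a sequence of $\dih_{n}$-equivariant sets $E_k\subset\R^3$ with $\Gamma(E_k)=\tfrac{1}{2}$ and $P(E_k)<1+\tfrac{1}{k}$, but such that $\Gamma(E_k\symdiff L)\geq\varepsilon$ for every $\dih_{n}$-equivariant half-space $L$.

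First, I would invoke compactness for sets of finite perimeter in Gaußian space: the uniform bound $P(E_k)\leq 2$ together with the rapid decay of the Gaußian weight at infinity (which yields tightness and prevents any escape of mass) produces, via the classical BV compactness theorem on compact exhaustions, a subsequence converging in $L^1(\R^3,\Gamma)$ to the indicator of a set $E_\infty$ of finite Gaußian perimeter. Continuity of $\Gamma$ and lower semicontinuity of $P$ give $\Gamma(E_\infty)=\tfrac{1}{2}$ and $P(E_\infty)\leq 1$. The rigidity in the Gaußian isoperimetric inequality of Sudakov--Tsirelson and Borell then forces $E_\infty$ to coincide, up to a $\Gamma$-negligible set, with some half-space.

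Next, I would verify that equivariance is preserved in the limit: for each $\varphi\in\dih_{n}$, one of the two possibilities $\varphi(E_k)=E_k$ or $\varphi(E_k)=E_k^c$ holds along an infinite subsequence, and passing to the $L^1(\Gamma)$-limit yields $\varphi(E_\infty)\in\{E_\infty,E_\infty^c\}$. Since $\dih_{n}$ is finite, a diagonal extraction makes this work for all group elements simultaneously, and hence $E_\infty$ is itself a $\dih_{n}$-equivariant half-space. The contradiction hypothesis then forces $\Gamma(E_k\symdiff E_\infty)\geq\varepsilon$ for every $k$ in the extracted subsequence, in contradiction with the established $L^1(\R^3,\Gamma)$-convergence $E_k\to E_\infty$.

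The most delicate point is the compactness step in the noncompact ambient space, where one must ensure that no mass concentrates at infinity despite the perimeter bound; this is controlled entirely by the rapid decay of the Gaußian weight, so a standard truncation plus diagonal argument is enough. This is precisely the generality covered by the stability statement \cite[Lemma 4.7]{Cianchi2011}, from which the claim falls out as the special case corresponding to $\dih_{n}$-equivariant competitors and the reference half-spaces they leave invariant.
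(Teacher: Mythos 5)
Your proposal is correct and follows essentially the same route as the paper: a contradiction argument combining compactness for ($\dih_n$-equivariant) sets of finite Gaußian perimeter, continuity of $\Gamma$ and lower semicontinuity of $P$ along the limit, and the rigidity of the Gaußian isoperimetric inequality to identify the limit with an equivariant half-space, contradicting the assumed lower bound on the symmetric difference. The extra details you supply (tightness at infinity and passing equivariance to the $L^1(\Gamma)$-limit) are precisely what the paper delegates to the compactness theory of \cite[\S\,4.2]{Cianchi2011}.
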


\begin{proof}
We specialise the proof of \cite[Lemma 4.7]{Cianchi2011} to the equivariant setting. 
Suppose there exists $\varepsilon>0$ and a sequence $\{E_j\}_{j\in\N}$ of $\dih_n$-equivariant sets satisfying $\Gamma(E_j)=\frac{1}{2}$ and $P(E_j)\leq1+\frac{1}{j}$ for every $j\in\N$ as well as $\Gamma(E_j\symdiff L)\geq\varepsilon$ for every $\dih_{n}$-equivariant half-space $L\subset\R^3$. 
By a compactness argument, there exists a $\dih_n$-equivariant measurable set $E_{\infty}\subset\R^3$ such that up to a subsequence $\Gamma(E_{\infty}\symdiff E_j)\to0$ as $j\to\infty$.
In fact, the compactness theory from \cite[§\,4.2]{Cianchi2011} also applies to the class of \emph{equivariant} sets of finite Gaußian perimeter.
In particular, $\Gamma(E_{\infty})=\frac{1}{2}$ and by lower semicontinuity of the Gaußian perimeter, $P(E_{\infty})\leq1$.
Hence, $E_{\infty}$ is optimal in the Gaußian isoperimetric inequality \cite[Theorem 4.1]{Cianchi2011} and therefore equivalent to a half-space which contradicts our assumption. 
\end{proof}

\begin{lemma}[{cf.~\cite[Lemma 3.4]{Carlotto2020}}]\label{lem:perimeter}
For every $\{\Lambda_t\}_{t\in[0,1]}\in\Pi$, there exists a family $\{E_t\}_{t\in[0,1]}$ of $\dih_{g+1}$-equivariant sets $E_t\subset\R^3$ with finite Gaußian perimeter such that the following properties hold.
\begin{enumerate} [label={\normalfont(\roman*)}]
\item\label{lem:perimeter-i} $E_0$ coincides with the upper half-space and $E_1$ with the lower half-space.
\item\label{lem:perimeter-ii} $\Gamma(E_t\symdiff E_{t_0})\to0$ whenever $t\to t_0$.
\item\label{lem:perimeter-iii} $\Lambda_t=\partial E_t$ for all $t\in[0,1]$.  
\item\label{lem:perimeter-iv} $\Gamma(E_t)=\frac{1}{2}$ for all $t\in[0,1]$. 
\end{enumerate}
\end{lemma}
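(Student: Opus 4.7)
The plan is to choose one connected component $E$ of $\R^3\setminus\Sigma$ and transport it through the isotopy that produces $\{\Lambda_t\}$ from the sweepout of Lemma~\ref{lem:sweepout}.
Writing $\Lambda_t=\Phi(t,\Sigma_t)$ for some $\dih_{g+1}$-isotopy $\Phi$, the surface $\Sigma_t=\tfrac{t}{1-t}\Sigma$ is smoothly embedded and closed for every $t\in\interval{0,1}$, and hence separates $\R^3$ into exactly two connected components.
Tracing the alternating desingularisation described in the proof of Lemma~\ref{lem:sweepout}, I would choose $E$ to be the component of $\R^3\setminus\Sigma$ that, up to a $\hsd^2$-null set, equals $\{x_3>0,\,\abs{x}>1\}\cup\{x_3<0,\,\abs{x}<1\}$; this is the choice compatible with the prescribed boundary values for $E_0$ and $E_1$.
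The rotation $\psi_\ell\in\dih_{g+1}$ by angle $\pi$ around any horizontal axis $\xi_\ell$ preserves $\Sigma$ (it sends $\Sigma^+$ to $\Sigma^-$) but interchanges the two components of $\R^3\setminus\Sigma$, since $\psi_\ell((0,0,\tfrac{1}{2}))=(0,0,-\tfrac{1}{2})$ lies in a different component than $(0,0,\tfrac{1}{2})$.
As $\psi_\ell$ fixes the origin it preserves the Gaussian measure, so this forces $\Gamma(E)=\tfrac{1}{2}$.

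For $t\in\interval{0,1}$ I would then set $E_t\vcentcolon=\Phi\bigl(t,\tfrac{t}{1-t}E\bigr)$ and define $E_0$, $E_1$ to be the upper resp.~lower half-space.
Properties \ref{lem:perimeter-i} and \ref{lem:perimeter-iii} are immediate from the definition since $\partial\bigl(\tfrac{t}{1-t}E\bigr)=\Sigma_t$ and $\Phi(t,\cdot)$ is a diffeomorphism.
Both \ref{lem:perimeter-iv} and the $\dih_{g+1}$-equivariance of $E_t$ follow from the equivariance of $\Phi$ and of the scaling together with the identity $\psi_\ell(E)=\R^3\setminus E$ established above:
\begin{align*}
\psi_\ell(E_t)=\Phi\bigl(t,\tfrac{t}{1-t}\psi_\ell(E)\bigr)=\Phi\bigl(t,\tfrac{t}{1-t}(\R^3\setminus E)\bigr)=\R^3\setminus E_t,
\end{align*}
whence $\Gamma(E_t)=\tfrac{1}{2}$ because $\psi_\ell$ preserves $\Gamma$.

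The most delicate point will be the continuity statement \ref{lem:perimeter-ii}. For $t_0\in\interval{0,1}$ it is automatic from the smooth $t$-dependence of the scaling and of $\Phi$, but the endpoints $t_0\in\{0,1\}$ require more care: $\Sigma_t$ converges to the horizontal plane only in the varifold sense, while the component structure of $\tfrac{t}{1-t}E$ is inherited from the full, bumpy surface $\Sigma$.
Near $t_0=0$ I would apply the triangle inequality
\begin{align*}
\Gamma(E_t\symdiff E_0)\leq\Gamma\bigl(E_t\symdiff\tfrac{t}{1-t}E\bigr)+\Gamma\bigl(\tfrac{t}{1-t}E\symdiff E_0\bigr).
\end{align*}
The second summand is bounded above by $\Gamma\bigl(\{\abs{x}\leq\tfrac{t}{1-t}\}\bigr)$ since, by the explicit description of $E$, the set $\tfrac{t}{1-t}E$ coincides with the upper half-space outside this ball, and this quantity vanishes as $t\to 0^+$.
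The first summand is controlled by the fact that $\Phi(t,\cdot)\to\mathrm{id}$ uniformly on compact sets, which confines $\Phi(t,A)\symdiff A$ to a tubular neighbourhood of $\partial A$ of width tending to zero and hence of vanishing Gaussian measure.
The case $t_0=1$ is analogous, now using that $\tfrac{t}{1-t}E$ agrees with the lower half-space outside a ball of radius $\tfrac{t}{1-t}\to\infty$, so only a set of decaying Gaussian measure is affected.
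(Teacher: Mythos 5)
Your construction is essentially the paper's own proof: define explicit finite-perimeter sets for the model sweepout, push them forward by the $\dih_{g+1}$-isotopy $\Phi$, note that \ref{lem:perimeter-i}--\ref{lem:perimeter-iii} are then immediate, and deduce equivariance and $\Gamma(E_t)=\tfrac12$ from the fact that the rotations $\psi_\ell$ about the horizontal axes swap $E_t$ with its complement; the paper leaves the model family and the continuity \ref{lem:perimeter-ii} as ``straightforward'', which you verify in more detail. (Two cosmetic quibbles that do not affect the argument: the chosen component of $\R^3\setminus\Sigma$ differs from $\{x_3>0,\,\abs{x}>1\}\cup\{x_3<0,\,\abs{x}<1\}$ by a set of small positive volume near the desingularised circle, not by a null set, and near $t_0=1$ the scaled set agrees with the lower half-space \emph{inside} the ball of radius $\tfrac{t}{1-t}$, so the symmetric difference is confined to the complement of that ball.)
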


\begin{proof}
For our explicit model sweepout $\{\Sigma_t\}_{t\in[0,1]}$ constructed in Lemma \ref{lem:sweepout} it is straightforward to define a family $\{E_t^{\Sigma}\}_{t\in[0,1]}$ of $\dih_{g+1}$-equivariant sets with finite Gaußian perimeter satisfying properties \ref{lem:perimeter-i}--\ref{lem:perimeter-iv}. 
Since any other sweepout $\{\Lambda_t\}_{t\in[0,1]}\in\Pi$ allows a $\dih_{g+1}$-isotopy $\Phi\colon[0,1]\times\R^3\to\R^3$ such that $\Lambda_t=\Phi(t,\Sigma_t)$ we may define $E_t\vcentcolon=\Phi(t,E^{\Sigma}_t)$. 
Then, properties \ref{lem:perimeter-i}--\ref{lem:perimeter-iii} are evident. 
Considering the rotation $\psi_\ell$ by angle $\pi$ around the horizontal axis $\xi_\ell$ as given in \eqref{eqn:axes} which is an isometry of Gaußian space, it is easy to verify that $\psi_\ell(E_t)$ coincides with $\R^3\setminus E_t$ up to a negligible set. 
Therefore, $\Gamma(E_t)=\Gamma(\R^3\setminus E_t)=\frac{1}{2}$ which proves \ref{lem:perimeter-iv}. 
\end{proof}

\begin{lemma}[Width estimate]\label{lem:width}
There exists $\delta>0$ such that $1+\delta\leq W\leq 1.02+\frac{4}{e}<3$.
\end{lemma}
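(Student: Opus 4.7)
The upper bound is immediate: the identity isotopy $\Phi(t,x)=x$ is a $\dih_{g+1}$-isotopy, so the model sweepout $\{\Sigma_t\}_{t\in[0,1]}$ from Lemma~\ref{lem:sweepout} itself lies in $\Pi$, and hence $W\leq\sup_{t\in[0,1]}F(\Sigma_t)\leq 1+\tfrac{4}{e}$.

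For the lower bound, my plan is a contradiction argument based on the stability of the Gaußian isoperimetric inequality (Lemma~\ref{lem:isoperimetric}) combined with Lemma~\ref{lem:perimeter}. The preliminary step is to classify the $\dih_{g+1}$-equivariant half-spaces $L\subset\R^3$ with $\Gamma(L)=\tfrac{1}{2}$: such an $L$ must be bounded by a hyperplane through the origin, and the condition $\psi_\ell(L)=\R^3\setminus L$ for every rotation $\psi_\ell$ by $\pi$ around a horizontal axis $\xi_\ell$ forces this hyperplane to contain all of $\xi_1,\ldots,\xi_{g+1}$. Since $g+1\geq 2$, these axes span the horizontal plane, so $L$ is either the upper half-space $\R^3_+$ or the lower half-space $\R^3_-$.

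Now fix any $\varepsilon\in\interval{0,\tfrac{1}{2}}$ and let $\delta>0$ be the corresponding constant furnished by Lemma~\ref{lem:isoperimetric} applied with $n=g+1$. Suppose, towards a contradiction, that $W<1+\delta$. Then there exists $\{\Lambda_t\}_{t\in[0,1]}\in\Pi$ with $\sup_{t}F(\Lambda_t)<1+\delta$. Applying Lemma~\ref{lem:perimeter} we obtain a family $\{E_t\}_{t\in[0,1]}$ of $\dih_{g+1}$-equivariant finite perimeter sets with $\Gamma(E_t)=\tfrac{1}{2}$, $P(E_t)=F(\Lambda_t)<1+\delta$, and $E_0=\R^3_+$, $E_1=\R^3_-$. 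By Lemma~\ref{lem:isoperimetric} together with the classification above, for every $t\in[0,1]$ either $\Gamma(E_t\symdiff\R^3_+)<\varepsilon$ or $\Gamma(E_t\symdiff\R^3_-)<\varepsilon$, and these two alternatives are mutually exclusive because $\Gamma(\R^3_+\symdiff\R^3_-)=1>2\varepsilon$.

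The key observation is then that the function $\tau\colon[0,1]\to[0,1]$, $\tau(t)\vcentcolon=\Gamma(E_t\symdiff\R^3_+)$, is continuous by property \ref{lem:perimeter-ii} of Lemma~\ref{lem:perimeter} (using $|\Gamma(E_t\symdiff\R^3_+)-\Gamma(E_{t_0}\symdiff\R^3_+)|\leq\Gamma(E_t\symdiff E_{t_0})$), with $\tau(0)=0$ and $\tau(1)=1$. By the intermediate value theorem there exists $t_\ast\in\interval{0,1}$ with $\tau(t_\ast)=\tfrac{1}{2}$, and since $\Gamma(E_{t_\ast}\symdiff\R^3_-)=1-\tau(t_\ast)=\tfrac{1}{2}$ as well, the set $E_{t_\ast}$ is $\varepsilon$-close to neither half-space, contradicting the dichotomy above. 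This forces $W\geq 1+\delta$, completing the proof. I expect no major obstacle; the main subtlety is verifying that $\R^3_+$ and $\R^3_-$ are indeed the only admissible equivariant half-spaces, which is where the $\dih_{g+1}$-symmetry enters decisively.
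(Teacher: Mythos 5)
Your overall strategy is the same as the paper's (isoperimetric stability via Lemma~\ref{lem:isoperimetric}, the equivariant finite-perimeter family of Lemma~\ref{lem:perimeter}, a classification of equivariant half-spaces, and a continuity/connectedness argument), and the upper bound is handled identically. However, there is a genuine gap in your classification step. By the definition used in the paper, a set $L$ is $\dih_{g+1}$-equivariant if for \emph{each} $\varphi\in\dih_{g+1}$ the image $\varphi(L)$ coincides with $L$ \emph{or} with $\R^3\setminus L$; it is not required that $\psi_\ell(L)=\R^3\setminus L$ for every rotation $\psi_\ell$ about a horizontal axis. Consequently your conclusion that the bounding plane must contain all of $\xi_1,\ldots,\xi_{g+1}$ is unjustified, and indeed false for $g=1$: for $\dih_2$ the half-spaces $\{\pm x_1>0\}$ and $\{\pm x_2>0\}$ are also equivariant (e.g.\ the rotation by $\pi$ about the $x$-axis maps $\{x_1>0\}$ to itself, while the other two rotations map it to its complement), exactly as the paper records. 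With these extra candidates your dichotomy ``either $\Gamma(E_t\symdiff\R^3_+)<\varepsilon$ or $\Gamma(E_t\symdiff\R^3_-)<\varepsilon$'' fails, and the contradiction at the intermediate-value point $t_\ast$ with $\tau(t_\ast)=\tfrac12$ evaporates: $E_{t_\ast}$ could simply be $\varepsilon$-close to $\{x_1>0\}$, which is perfectly compatible with $\Gamma(E_{t_\ast}\symdiff\R^3_+)=\tfrac12$. So as written the lower bound is only proved for $g\geq2$.

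The repair stays within your framework but needs two adjustments, which is essentially what the paper does. First, list \emph{all} $\dih_{g+1}$-equivariant half-spaces of Gaußian measure $\tfrac12$: the upper and lower half-spaces for every $g$, plus $\{\pm x_1>0\}$ and $\{\pm x_2>0\}$ when $g=1$ (your rotation-eigenvector argument does give this once you allow $\varphi(L)=L$ as well as $\varphi(L)=\R^3\setminus L$). Second, choose $\varepsilon$ strictly smaller than $\tfrac14$ (the paper takes $\varepsilon=\tfrac18$): since any two distinct candidate half-spaces differ in Gaußian measure by at least $\tfrac12$, the nearest candidate $L_t$ is uniquely determined and, by property \ref{lem:perimeter-ii} and the triangle inequality for symmetric differences, the map $t\mapsto L_t$ is locally constant, hence constant, contradicting $E_0=\R^3_+$ and $E_1=\R^3_-$. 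Alternatively, your scalar function $\tau(t)=\Gamma(E_t\symdiff\R^3_+)$ still works if you note that it can only take values in $[0,\varepsilon)\cup(\tfrac12-\varepsilon,\tfrac12+\varepsilon)\cup(1-\varepsilon,1]$ and therefore, by the intermediate value theorem, cannot connect $\tau(0)=0$ to $\tau(1)=1$ when $\varepsilon<\tfrac14$.
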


\begin{proof}
The upper bound follows directly from Lemma \ref{lem:sweepout} by definition of width. 
Let $\delta>0$ be the value associated to $\varepsilon=\frac{1}{8}$ in Lemma \ref{lem:isoperimetric}.   
Towards a contradiction, suppose that there exists a sweepout $\{\Lambda_t\}_{t\in[0,1]}\in\Pi$ such that 
\begin{align}\label{eqn:20200502}
\sup_{t\in[0,1]}F(\Lambda_t)<1+\delta.
\end{align}
Let $\{E_t\}_{t\in[0,1]}$ be the family of finite perimeter sets associated to $\{\Lambda_t\}_{t\in[0,1]}$ as given in Lemma~\ref{lem:perimeter}. 
Then, Lemma \ref{lem:isoperimetric} applies and yields that for every $t\in[0,1]$ there exists a $\dih_{g+1}$-equivariant half-space $L_t$ such that $\Gamma(E_t\symdiff L_t)<\frac{1}{8}$. 
By the triangle inequality for symmetric differences 
\begin{align}\label{eqn:triangle} 
\Gamma(L_{t}\symdiff L_{s})
\leq\Gamma(E_{t}\symdiff E_{s})+\tfrac{2}{8}
\end{align}
for any $t,s\in[0,1]$. 
The only $\dih_{g+1}$-equivariant half-spaces are the upper and lower half-spaces 
$\{x\in\R^3\mid \pm x_3>0\}$
and for $g=1$ also the half-spaces 
$\{x\in\R^3\mid \pm x_1>0\}$ and $\{x\in\R^3\mid \pm x_2>0\}$. 
In any case, \eqref{eqn:triangle} and property \ref{lem:perimeter-ii} imply that the map $t\mapsto L_t$ is constant. 
This however contradicts property \ref{lem:perimeter-i} and the claim follows. 
\end{proof}

Lemma \ref{lem:width} implies that the sweepout we have constructed satisfies the mountain pass condition $W>1=\max\{F(\Sigma_0),F(\Sigma_1)\}$.
Hence, by \cite[Theorem 2.4]{Ketover2018} and \cite[Theorem 1.3]{Ketoverc} there exists a min-max sequence $\{\Sigma^j\}_{j\in\N}$ of surfaces which are invariant under the action of $\dih_{g+1}$ and converge in the sense of varifolds to $m\Theta_g$, where $\Theta_g$ is a smooth, embedded minimal surface in Gaußian space $(\R^3,e^{-\frac{1}{4}\abs{x}^2}g_{\R^3})$. 
Moreover, we obtain the following properties (cf.~\cite[Remark 4.1]{Carlotto2020}). 
\begin{enumerate}[label={\normalfont(\roman*)}]
\item\label{min-max-theorem-axes} The surface $\Theta_g$ contains the horizontal axes $\xi_1,\ldots,\xi_{g+1}$ defined in \eqref{eqn:axes} and intersects the vertical axis $\xi_0$ defined in \eqref{eqn:axis0} orthogonally. 
\item\label{min-max-theorem-odd} The multiplicity $m$ is an odd integer.
\item\label{min-max-theorem-genus} $\genus(\Theta_g)\leq g$.
\end{enumerate}

Property \ref{min-max-theorem-axes} is visualised in Section \ref{sec:visualisation}, Figure \ref{fig:half-end} and follows from the corresponding property of the sweepout constructed in Lemma~\ref{lem:sweepout}.  
In particular, $\Theta_g$ is noncompact with at least one end. 
This is nontrivial since Gaußian space allows closed minimal surfaces like the sphere or Angenent's \cite{Angenent1992} torus. 
 
\begin{lemma}\label{lem:multiplicity}
The multiplicity $m$ in property \ref{min-max-theorem-odd} is equal to $1$ and $\Theta_g$ is not isometric to a flat plane. 
\end{lemma}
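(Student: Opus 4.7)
The plan hinges on the two-sided width estimate from Lemma~\ref{lem:width}, together with two standard facts. The first is the identity $W = m\,F(\Theta_g)$, a direct consequence of the Ketover--Zhou min-max construction: the minimising sequence $\{\Sigma^j\}$ satisfies $F(\Sigma^j)\to W$ by definition of the width, and converges in the sense of varifolds to $m\Theta_g$, whence also $F(\Sigma^j)\to m\,F(\Theta_g)$. Because the Gaussian weight $e^{-\frac{1}{4}\abs{x}^2}$ decays rapidly at infinity, no mass escapes in the limit even though $\Theta_g$ is non-compact; this absence of mass loss is in fact built into the Gaussian min-max machinery of \cite{Ketover2018,Ketoverc} and is the only mildly non-routine ingredient. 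The second fact is the entropy lower bound $F(\Sigma)\geq 1$ for every complete, properly embedded self-shrinker $\Sigma\subset\R^3$, with equality if and only if $\Sigma$ is a plane through the origin; this is standard and follows from Huisken's monotonicity formula together with the tangent plane analysis at infinity.

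Given these inputs, the argument reduces to elementary arithmetic. Suppose first that $m\geq 3$. Since $m$ is an odd positive integer by property~\ref{min-max-theorem-odd} and $F(\Theta_g)\geq 1$, the upper width bound gives
\begin{align*}
3\leq m \leq m\,F(\Theta_g) = W \leq 1+\tfrac{4}{e} < 3,
\end{align*}
a contradiction. Hence $m=1$. With $m=1$, we obtain $F(\Theta_g)=W\geq 1+\delta>1$ from the lower width bound, so the equality case of the entropy inequality rules out $\Theta_g$ being any plane through the origin; in particular $\Theta_g$ is not isometric to the horizontal plane. The one place where I would expect to spend care (rather than genuine difficulty) is spelling out that the Gaussian mass does not leak to infinity along the min-max sequence, which is what makes the identity $W=mF(\Theta_g)$ rigorous in the non-compact setting; every other step is purely numerical.
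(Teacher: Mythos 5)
Your argument is correct and is essentially the paper's proof: the identity $mF(\Theta_g)=W$ from varifold convergence, the bound $W\leq 1+\frac{4}{e}<3$ together with $F(\Theta_g)\geq 1$ and the oddness of $m$ to force $m=1$, and then the strict lower bound $W\geq 1+\delta>1$ from Lemma~\ref{lem:width} to exclude the plane. The only cosmetic difference is your justification of $F(\Theta_g)\geq 1$ via the standard entropy lower bound for self-shrinkers, where the paper instead invokes the Gaußian isoperimetric inequality; both are valid, and your remark about no mass loss at infinity is indeed what is packaged into the Ketover--Zhou min-max theorem.
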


\begin{proof}
Convergence of the min-max sequence in the sense of varifolds to $m\Theta_g$ implies  
\begin{align}\label{eqn:20200412}
m F(\Theta_g)=W<3
\end{align}
where we recall the upper bound on the width $W$ from Lemma \ref{lem:width}.  
The Gaußian isoperimetric inequality implies $F(\Theta_g)\geq1$. 
Hence, estimate~\eqref{eqn:20200412} yields $m<3$ and with property~\ref{min-max-theorem-odd} we obtain $m=1$ as claimed. 
Consequently, the lower bound $F(\Theta_g)=W>1$ from Lemma~\ref{lem:width} implies that $\Theta_g$ is not isometric to the flat plane $\Sigma_0$. 
\end{proof}

\subsection{Determining the genus}\label{subsec:genus}

A \emph{regular cone} in $\R^3$ is a subset of the form $\mathcal{C}=\{\tau\mathcal{L}\mid\tau>0\}$, where $\mathcal{L}$ is a smooth, embedded,  codimension-one submanifold of the sphere $\Sp^2$. 
Moreover, we introduce the notation $\mathcal{Z}_v=\{x\in\R^3\mid\dist(x,\operatorname{span}(v))=\sqrt{2}\}$ for a self-shrinking cylinder in direction $v\in\R^3\setminus\{0\}$. 
By the following result of L.\;Wang \cite{Wang2016}, any end of our noncompact self-shrinker $\Theta_g$ must be asymptotic to a regular cone or a self-shrinking cylinder.  

\begin{theorem}[{L.~Wang \cite[Theorem 1.1]{Wang2016}}]\label{thm:cones}  
If $M$ is an end of a noncompact self-shrinker in $\mathbb R^3$ of finite topology then either of the following holds:
\begin{enumerate}[label={\normalfont(\roman*)}]
\item $\displaystyle\lim_{\tau\to\infty}\tau^{-1}M=\mathcal{C}$  in $C^\infty_{\mathrm{loc}}(\R^3\setminus\{0\})$ for a regular cone $\mathcal{C}$ in $\R^3$,
\item $\displaystyle\lim_{\tau\rightarrow\infty}(M-\tau v)=\mathcal{Z}_v$ 
 in $C^\infty_{\mathrm{loc}}(\R^3)$ for some $v\in\R^3\times\{0\}$.
\end{enumerate}
\end{theorem}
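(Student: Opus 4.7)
The plan is to establish the dichotomy via asymptotic analysis of the end $M$ under two complementary natural procedures: dilations $M_\tau\vcentcolon=\tau^{-1}M$ as $\tau\to\infty$ to detect conical behaviour, and translations $M-\tau v$ along a unit direction $v$ to detect cylindrical behaviour. The trigger that determines which case occurs is the asymptotic size of the product $|A|(x)\,|x|$ along the end.

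Under the dilation $M_\tau=\tau^{-1}M$, the shrinker equation $H=\tfrac{1}{2}\sk{x,\nu}$ transforms into $H_{M_\tau}=\tfrac{\tau^2}{2}\sk{y,\nu}$ on $M_\tau$. If $|A|(x)\,|x|$ remains bounded along the end, then standard $\varepsilon$-regularity for shrinkers together with uniform area bounds on annuli $\{R<|x|<2R\}$ (obtained via Huisken's monotonicity formula applied to the ancient flow $\{\sqrt{-t}\,M\}_{t<0}$) gives locally uniform $C^\infty$ bounds for $M_\tau$ away from the origin. The rescaled equation then forces $\sk{y,\nu}\to 0$ in the limit, so any subsequential $C^\infty_{\mathrm{loc}}$ limit is a smooth, dilation-invariant minimal surface on $\R^3\setminus\{0\}$. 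The finite topology of $M$ rules out singular strata and higher multiplicity, so the limit is a regular cone $\mathcal{C}$, yielding case (i).

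If instead $|A|(x)\,|x|$ is unbounded, select points $p_k\in M$ with $|p_k|\to\infty$ and $|A|(p_k)\,|p_k|\to\infty$, and examine translations $M-p_k$ along the direction $v=\lim p_k/|p_k|$. The shrinker equation then takes the form $H=\tfrac{1}{2}\sk{y,\nu}+\tfrac{|p_k|}{2}\sk{v,\nu}$, and boundedness of the left-hand side forces $\sk{v,\nu}\to 0$, so any subsequential limit is a self-shrinker invariant under translations along $v$, i.e., a product $\R\times\Gamma$ for a planar self-shrinker $\Gamma$ in the orthogonal plane. The classification of planar self-shrinkers (lines through the origin and the circle of radius $\sqrt{2}$), together with embeddedness and the condition $|A|(p_k)\,|p_k|\to\infty$, forces $\Gamma$ to be the circle, giving the cylinder $\mathcal{Z}_v$ and hence case (ii).

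The main obstacle is to upgrade subsequential convergence to unconditional $C^\infty_{\mathrm{loc}}$ convergence of the entire family $\{M_\tau\}$, respectively $\{M-\tau v\}$, and to rule out oscillation between distinct conical or cylindrical limits. This requires a \L{}ojasiewicz--Simon inequality for the Gaußian area functional $F$ near the regular cone (or cylinder), combined with a backward-uniqueness theorem for the drift-elliptic shrinker operator adapted to the conical end. These analytic inputs constitute the essential depth of L.\;Wang's original proof, the compactness and classification arguments above being the comparatively easier preliminaries.
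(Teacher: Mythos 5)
This statement is not proved in the paper at all: it is quoted verbatim as L.~Wang's theorem \cite[Theorem 1.1]{Wang2016}, and the paper's ``proof'' is the citation. So the only meaningful question is whether your sketch would stand on its own as a proof of Wang's theorem, and it does not. In the conical branch, the assertion that the subsequential limit of $\tau^{-1}M$ is a \emph{minimal} dilation-invariant surface is wrong (from $H_{M_\tau}=\tfrac{\tau^2}{2}\sk{y,\nu}$ with bounded curvature you only get $\sk{y,\nu}\to0$, i.e.\ a cone; the asymptotic cones of shrinker ends are generically not minimal), and the decisive points --- multiplicity one, smoothness and embeddedness of the link, and above all uniqueness of the blow-down together with full rather than subsequential convergence --- are dispatched with the phrase ``finite topology rules out singular strata and higher multiplicity'' and an appeal to a Łojasiewicz--Simon inequality ``near the regular cone''. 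No such inequality is available for general (non-minimal, noncompact-link) cones; Wang's actual argument for the conical regime runs through entropy/area bounds via Huisken monotonicity, Brakke--White local regularity, and Carleman-type backward-uniqueness estimates, and the Łojasiewicz machinery of Colding--Minicozzi enters only in the cylindrical regime.

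The cylindrical branch has a more basic gap. You choose points $p_k$ with $\abs{A}(p_k)\abs{p_k}\to\infty$, but this gives no curvature bound at $p_k$ (indeed $\abs{A}(p_k)$ itself may tend to $0$ or to $\infty$), so there is no compactness allowing you to extract a smooth limit of $M-p_k$ in the first place. Even granting a smooth limit, the translated equation reads $H=\tfrac{1}{2}\sk{y,\nu}+\tfrac{1}{2}\sk{p_k,\nu}$, and once $\sk{v,\nu}\to0$ the term $\tfrac{1}{2}\sk{p_k,\nu}$ is an $\infty\cdot0$ indeterminate: it need not vanish, so the limit need not satisfy the shrinker equation, and the step ``translation-invariant self-shrinker, hence $\R\times\Gamma$ with $\Gamma$ the circle'' collapses (nor does $\abs{A}(p_k)\abs{p_k}\to\infty$ exclude $\Gamma$ being a line). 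Finally, the proposed dichotomy trigger ($\abs{A}\abs{x}$ bounded versus unbounded along the end) is itself something that must be shown to produce exactly the two stated outcomes with no oscillation between them --- which is precisely the content of Wang's theorem, not a preliminary. In short, the sketch names the correct deep ingredients but proves neither them nor the ``easy'' reductions; within this paper the statement should simply be treated as the cited external input that it is.
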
 

\pagebreak[3]

Sun and Z.~Wang \cite[Appendix A]{Sun2020} observed that L.~Wang's argument for Theorem \ref{thm:cones} still works if the assumption of finite topology is replaced by finite genus. 
In particular, the following (much weaker) statement holds. 

\begin{corollary}\label{cor:connected}
For every $g\in\N$ there exists some radius $r_g>0$ such that for every $R\geq r_g$ every connected component of $\Theta_g\setminus\B_{R}$ is diffeomorphic to $\Sp^1\times\Interval{0,\infty}$ and such that $\Theta_g\cap\B_{R}$ is connected and properly embedded in $\B_{R}$ with $\genus(\Theta_g\cap\B_{R})=\genus(\Theta_g)$. 
\end{corollary}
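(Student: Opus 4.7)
The plan is to combine the asymptotic description provided by Theorem \ref{thm:cones} (in the form extended to finite-genus surfaces by Sun and Z.\ Wang) with elementary topological bookkeeping.

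My first task is to understand the structure at infinity. Since $\genus(\Theta_g)\leq g$ by property \ref{min-max-theorem-genus}, Theorem \ref{thm:cones} applies to every end of $\Theta_g$, producing as asymptotic model either a regular cone $\mathcal{C}=\{\tau\mathcal{L}\mid\tau>0\}$ or a self-shrinking cylinder $\mathcal{Z}_v$. In either case the model, restricted to the complement of a large ball, is a disjoint union of smooth topological annuli: the cross-section $\mathcal{L}$ of a regular cone is a smooth closed $1$-submanifold of $\Sp^2$ and hence a finite disjoint union of embedded circles, while a cylinder is itself an annulus. I would then argue that $\Theta_g$ has only finitely many ends. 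This is the step where I use global information: each asymptotic model carries a definite amount of Gaußian area on its unbounded part, whereas $F(\Theta_g)\leq 1+\tfrac{4}{e}$ by Lemma \ref{lem:multiplicity}, which rules out infinitely many ends.

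Next I would pass from the asymptotic model to the actual surface. The $C^\infty_{\mathrm{loc}}$-convergence in Theorem \ref{thm:cones} implies that for every end $M$ of $\Theta_g$ and for $R$ sufficiently large, $M\setminus\B_R$ is a normal graph of small $C^1$-norm over the corresponding piece of the asymptotic model; in particular it is diffeomorphic to $\Sp^1\times\Interval{0,\infty}$. Setting $r_g$ to be the maximum of the finitely many such radii (and enlarging it, by Sard's theorem, so that $\Theta_g$ meets $\partial\B_R$ transversely for every $R\geq r_g$) yields the first assertion of the corollary.

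For the compact piece $\Theta_g\cap\B_R$, proper embeddedness is inherited from properness of $\Theta_g$ in $\R^3$. Connectedness follows from connectedness of $\Theta_g$ itself, which in turn can be traced back to the connectedness of the sweepout slices constructed in Lemma \ref{lem:sweepout}; once each component of $\Theta_g\setminus\B_R$ is annular and hence connected, the compact core cannot disconnect. The genus identity reduces to a standard Euler-characteristic computation: capping off each annular end with a disc produces a closed orientable surface whose genus equals $\genus(\Theta_g)$, and since annuli contribute no genus this common value coincides with $\genus(\Theta_g\cap\B_R)$.

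The main obstacle will be the finiteness of the number of ends, which is the one step that requires more than a purely local asymptotic analysis. The Gaußian area bound is the natural tool, but extracting a quantitative lower bound on the Gaußian area of each end requires some care; alternatively, one could cite the finer finite-topology statement from Sun--Wang \cite[Appendix A]{Sun2020} directly to bypass this step. Once finiteness is in hand, the remaining arguments are standard surface topology.
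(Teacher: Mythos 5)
Your overall structure matches the paper's: invoke Theorem \ref{thm:cones} (in the finite-genus form of Sun--Wang) for the annular structure of the ends, get finiteness of the number of ends, and then do the topological bookkeeping. On finiteness, note that your heuristic ``each asymptotic model carries a definite amount of Gaußian area'' is not quantitatively sound as stated -- a conical end over a short link in $\Sp^2$ contributes arbitrarily little Gaußian area, so the bound $F(\Theta_g)\leq 1+\tfrac{4}{e}$ does not by itself cap the number of ends in any obvious way. Your fallback, citing \cite[Appendix A]{Sun2020} directly, is exactly what the paper does (it quotes Lemma A.2 there: bounded genus and entropy imply finitely many ends), so this is not a real obstruction, but you should commit to that route rather than the area heuristic.

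The genuine gap is your justification of connectedness. You write that connectedness of $\Theta_g$ ``can be traced back to the connectedness of the sweepout slices constructed in Lemma \ref{lem:sweepout}'', but connectedness of the slices does not survive the min-max limit: varifold limits of connected surfaces can perfectly well be disconnected (indeed, in Section \ref{sec:high_genus} the limit of the $\Theta_g$ themselves is a disconnected union of a plane and a sphere), so nothing in the min-max construction alone prevents the limit minimal surface from having several components. The paper closes this step differently: since each component of $\Theta_g\setminus\B_R$ is an annulus with exactly one boundary circle, a disconnected $\Theta_g\cap\B_R$ would force $\Theta_g$ itself to be disconnected, and this is ruled out by the Frankel property of Gaußian space (any two complete self-shrinkers intersect, cf.\ \cite{Impera2021}), which is incompatible with embeddedness of a two-component $\Theta_g$. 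You need this (or an equivalent intersection/maximum-principle argument) as an input; without it your connectedness claim is unsupported. The remaining points -- transversality via Sard, proper embeddedness, and the genus identity by capping the annular ends with discs -- are fine and consistent with the paper.
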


\begin{proof}
As stated in \cite[Lemma A.2]{Sun2020}, the self-shrinker $\Theta_g$ can only have finitely many ends since its genus and Gaußian area are bounded. 
Thus, the existence of $r_g>0$ such that for every $R\geq r_g$ every connected component of $\Theta_g\setminus\B_{R}$ is diffeomorphic to $\Sp^1\times\Interval{0,\infty}$ is a consequence of Theorem \ref{thm:cones}. 
In particular, every connected component of $\Theta_g\setminus\B_{R}$ has only one boundary component. 
Therefore, if $\Theta_g\cap\B_{R}$ were disconnected, then $\Theta_g$ would be disconnected. This however contradicts the Frankel property of Gaußian space (cf. \cite{Impera2021}). 
\end{proof}

For the proof of Theorem \ref{thm:main} it remains to rule out that genus is lost in the min-max procedure. 
Before we prove the corresponding statements in Lemma \ref{lem:genus_Theta}, we study the general structure of equivariant surfaces.  
Given $2\leq n\in\N$, let $\Z_{n}<\dih_{n}$ be the cyclic subgroup of order $n$ generated by the rotation of angle $\frac{2\pi}{n}$ around the vertical axis $\xi_0\subset\R^3$.   

\begin{lemma}\label{lem:genus}
Given $1\leq g\in\N$, let $\Sigma\subset\R^3$ be any closed, connected, embedded $\Z_{g+1}$-equivariant surface of genus $\gamma\in\{1,\ldots,g\}$. 
If $\Sigma$ is disjoint from the vertical axis $\xi_0$ then $\gamma=1$. 
If $\Sigma$ intersects $\xi_0$ then the number of intersections is $4$ and $\gamma=g$. 
\end{lemma}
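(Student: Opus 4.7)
The plan is to apply the Riemann--Hurwitz formula to the branched cover $\pi\colon\Sigma\to\Sigma/C_{g+1}$. Since every non-trivial element of $C_{g+1}$ is a rotation about $\xi_0$ whose fixed-point set in $\R^3$ is precisely $\xi_0$, the stabiliser of any $p\in\Sigma$ is either trivial (when $p\notin\xi_0$) or the entire group $C_{g+1}$ (when $p\in\xi_0$). If $\Sigma\cap\xi_0=\emptyset$, then $\pi$ is an unramified $(g+1)$-sheeted cover onto a closed, connected, orientable quotient surface of some genus $\gamma'$, and comparing Euler characteristics gives $\gamma-1=(g+1)(\gamma'-1)$. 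The hypothesis $1\le\gamma\le g$ excludes $\gamma'=0$ (which would give $\gamma=-g$) and $\gamma'\ge 2$ (which would give $\gamma\ge g+2$), leaving only $\gamma'=1$ and $\gamma=1$.

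In the case $\Sigma\cap\xi_0\ne\emptyset$, I first establish that the intersection consists of finitely many transverse points. The tangent plane $T_p\Sigma$ at $p\in\Sigma\cap\xi_0$ must be $C_{g+1}$-invariant. For $g\ge 2$ the rotation of angle $2\pi/(g+1)<\pi$ about $\xi_0$ admits only the horizontal plane as an invariant $2$-plane through $p$, so $T_p\Sigma\perp\xi_0$ and the intersection is transverse. For $g=1$ the alternative of a vertical invariant plane is also possible, but writing $\Sigma$ locally as a graph odd in one horizontal coordinate forces $\xi_0\subset\Sigma$ near $p$, contradicting isolatedness. Compactness of $\Sigma$ then implies that $k:=\#(\Sigma\cap\xi_0)$ is finite, and a mod-$2$ intersection argument (since $\xi_0$ lies in the unbounded component of $\R^3\setminus\Sigma$ near $\pm\infty$) yields $k\in 2\N$. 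Riemann--Hurwitz applied to the degree-$(g+1)$ cover with $k$ branch points of ramification index $g+1$ now reads
\begin{align*}
2-2\gamma=(g+1)(2-2\gamma')-kg,
\end{align*}
equivalently $\gamma'=\bigl(g(2-k)+2\gamma\bigr)/(2(g+1))$. Non-negativity of $\gamma'$ combined with $\gamma\le g$ forces $k\le 4$; among the admissible even values, $k=2$ would require $(g+1)\mid\gamma$ with $1\le\gamma\le g$, which is impossible, so $k=4$, $\gamma=g$ and $\gamma'=0$.

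The main obstacle is not the Riemann--Hurwitz bookkeeping, which is arithmetic once the data is in place, but the preliminary geometric step of showing that $\Sigma\cap\xi_0$ consists of isolated transverse points. Excluding tangential or arc-like intersections relies on a careful classification of $C_{g+1}$-invariant $2$-planes in $\R^3$, and is most delicate in the low-symmetry regime $g=1$ where additional vertical invariant planes are available and must be ruled out by a local graphical argument.
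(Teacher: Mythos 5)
Your Riemann--Hurwitz bookkeeping is correct and is essentially the same computation as in the paper: the quotient $\Sigma'=\Sigma/C_{g+1}$, the identity $2-2\gamma=(g+1)(2-2\gamma')-kg$ with $k$ branch points of full ramification on $\xi_0$, and the arithmetic case analysis forcing $\gamma=1$ (free case) or $k=4$, $\gamma'=0$, $\gamma=g$ (branched case) all match. The gap is in your preliminary geometric step, and specifically in the case $g=1$. There you correctly observe that the tangent plane at $p\in\Sigma\cap\xi_0$ could a priori be a \emph{vertical} $C_2$-invariant plane, and you deduce from oddness of the graph function that a whole arc of $\xi_0$ would then lie in $\Sigma$; but dismissing this by saying it ``contradicts isolatedness'' is circular, because isolatedness of the intersection points is exactly what you are trying to establish at that moment. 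Nothing you have proved up to that point excludes the configuration $\xi_0\cap\B_r(p)\subset\Sigma$; since $\Sigma$ is an arbitrary closed embedded equivariant surface (no minimality, hence no unique continuation), this scenario has to be ruled out by a genuine argument, not by appeal to the conclusion.

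The missing ingredient is the one the paper uses: because $\Sigma$ is closed and embedded, every element of $C_{g+1}$, being an isometry preserving $\Sigma$, maps the bounded component of $\R^3\setminus\Sigma$ to itself, hence preserves the global unit normal $\nu$ (not merely the tangent plane). At any $p\in\Sigma\cap\xi_0$ this forces $\nu(p)$ to be a fixed vector of the rotation, i.e. $\nu(p)=\pm(0,0,1)$, so the tangent plane is horizontal and the intersection is transverse \emph{uniformly in $g$}, including $g=1$; in particular the arc scenario cannot occur (at such an arc the normal would be horizontal yet fixed by the rotation). Note that this normal-invariance also supplies the orientation-preservation of the action on $\Sigma$ that you tacitly use when you assert that the quotient is a closed \emph{orientable} surface of some genus $\gamma'$ (a free but orientation-reversing action would give a non-orientable quotient and a different Euler-characteristic count), so it is worth stating explicitly rather than working only with invariance of $T_p\Sigma$. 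With that fact inserted, your proof goes through and coincides with the paper's.
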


\begin{proof}
Being closed and embedded, $\Sigma\subset\R^3$ is two-sided and thus allows a global unit normal vector field $\nu$. 
Moreover, since $\Sigma$ is $\Z_{g+1}$-equivariant, $\nu$ is invariant under the action of $\Z_{g+1}$. 
If $\Sigma\cap\xi_0\ni x_0$ is nonempty, then $\nu(x_0)=\pm(0,0,1)$ since $x_0$ is fixed under rotation around $\xi_0$. 
Hence, any intersection of $\Sigma$ with the vertical axis $\xi_0$ must be orthogonal and the number $i$ of such intersections is well-defined, finite and even, that is $i=2j$ for some nonnegative integer $j$.  

The quotient $\Sigma'=\Sigma/\Z_{g+1}$ is an orientable topological surface without boundary and therefore has Euler-Characteristic $\chi(\Sigma')=2-2\gamma'$ for some nonnegative integer $\gamma'$. 
A variant of the Riemann--Hurwitz formula (see \cite[§\,IV.3]{Freitag2011} and \cite[Remark B.2]{Carlotto2020}) implies
\begin{align} 
\chi(\Sigma)&=(g+1)\chi(\Sigma')-2jg. 
\intertext{Substituting $\chi(\Sigma)=2-2\gamma$ and $\chi(\Sigma')=2-2\gamma'$, we obtain}
\label{eqn:20210301}
\gamma&=(g+1)\gamma' +(j-1)g.
\end{align}
All variables in equation \eqref{eqn:20210301} are nonnegative integers.
If $\Sigma$ is disjoint from $\xi_0$, then $j=0$ and the assumption $1\leq\gamma\leq g$ implies $\gamma'=1$ and hence $\gamma=1$. 
The case $j=1$ would imply $\gamma=0$ or $\gamma\geq(g+1)$ which contradicts our assumption on $\gamma$. Similarly, $j\geq3$ is excluded. 
The only remaining case is $j=2$ which then implies $\gamma'=0$, $i=4$ and $\gamma=g$ as claimed. 
\end{proof}

The following corollary generalises \cite[Lemma B.1]{Carlotto2020} by dropping the assumption that the boundary of the surface in question is connected.

\begin{corollary}\label{cor:genus}
Given $1\leq g\in\N$, let $\B\subset\R^3$ be any convex, bounded, $\Z_{g+1}$-equivariant domain with piecewise smooth boundary and let $\Sigma\subset\B$ be any compact, connected, $\Z_{g+1}$-equivariant surface of genus $\gamma\in\{1,\ldots,g\}$ which is properly embedded, i.\,e.~$\partial\Sigma=\Sigma\cap\partial B$.  
Then $\Sigma$ has genus $\gamma\in\{1,g\}$ and if the intersection with the axis of rotation is nonempty, then $\Sigma$ has genus $\gamma=g$.  
\end{corollary}

\begin{proof} 
The genus of $\Sigma$ is defined as the genus of the closed surface which is obtained by closing up each boundary component of $\Sigma$ with a topological disc. 
Below we will show that this procedure can be done while preserving the embeddedness and the $\Z_{g+1}$-equivariance of $\Sigma$. 
Since $B$ is convex, $\Sigma$ can be extended radially to a $\Z_{g+1}$-equivariant, properly embedded surface in some round ball and this extension does not change the genus of $\Sigma$. 
Hence, we may assume without loss of generality that $B$ is a ball around the origin.  

Let $\beta$ be any boundary component of $\Sigma$ and, of the two domains in $\partial B$ bounded by $\beta$, let $S_\beta$ be the one with smaller area (or any of the two domains if they have equal area). 
Given $R_\beta\geq1$, let $D_\beta\vcentcolon=[1,R_\beta]\beta\cup R_\beta S_\beta$, where the multiplication by $[1,R_\beta]$ respectively $R_\beta$ is to be understood as scaling. 
Then, $D_\beta$ is a topological disc. 
Choosing $R_\beta=1+\operatorname{area}(S_\beta)$ we can achieve that the corresponding discs $D_\beta$ are pairwise disjoint  as one varies $\beta$. 
Indeed, let $\beta_1$ and $\beta_2$ be two arbitrary boundary components of $\Sigma$ which we label such that $\operatorname{area}(S_{\beta_1})\leq\operatorname{area}(S_{\beta_2})$. 
Assuming $\beta_1\neq\beta_2$ we have $\beta_1\cap\beta_2=\emptyset$. 
If $S_{\beta_1}\cap S_{\beta_2}=\emptyset$ then $D_{\beta_1}\cap D_{\beta_2}=\emptyset$ follows trivially. 
If $S_{\beta_1}\cap S_{\beta_2}\neq\emptyset$ then $S_{\beta_1}\subset S_{\beta_2}$ and $R_{\beta_1}<R_{\beta_2}$ which implies $D_{\beta_1}\cap D_{\beta_2}=\emptyset$. 
Smoothing the union $\bigcup_\beta D_\beta\cup\Sigma$ equivariantly, we obtain a closed, embedded, $\Z_{g+1}$-equivariant surface. 
The claim then follows from Lemma~\ref{lem:genus}. 
\end{proof}

\begin{lemma}\label{lem:genus_Theta}
The genus of the self-shrinker $\Theta_g$ is equal to $g$. 
\end{lemma}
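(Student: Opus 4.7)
The inequality $\genus(\Theta_g) \leq g$ is already available from property~\ref{min-max-theorem-genus} of the min-max construction, so the plan is to establish the matching lower bound $\genus(\Theta_g) \geq g$.

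The first step is to rule out $\genus(\Theta_g) = 0$. By Brendle's classification of genus-zero embedded self-shrinkers in $\R^3$, any such surface must be a plane, the sphere of radius $2$, or a cylinder of radius $\sqrt{2}$ centred at the origin. I would exclude each case in turn. A plane satisfies $F = 1$, contradicting $F(\Theta_g) = W > 1$ from Lemma~\ref{lem:multiplicity}. The sphere of radius $2$ is compact, whereas $\Theta_g$ contains the unbounded horizontal axes $\xi_1,\dots,\xi_{g+1}$ by property~\ref{min-max-theorem-axes} and is therefore noncompact. Finally, the only self-shrinking cylinder of radius $\sqrt{2}$ invariant under $\dih_{g+1}$ is the vertical cylinder $\mathcal{Z}_{e_3}$, which does not pass through the origin; however $\Theta_g$ does pass through the origin, as it lies on every horizontal axis $\xi_\ell$. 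Consequently, $\genus(\Theta_g) \geq 1$.

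For the second step, I would choose $R \geq r_g$ as provided by Corollary~\ref{cor:connected}, so that $\Sigma_R \vcentcolon= \Theta_g \cap \B_R$ is connected, properly embedded in $\B_R$ and satisfies $\genus(\Sigma_R) = \genus(\Theta_g)$. Since $\dih_{g+1}$-equivariance is inherited from $\Theta_g$ and the origin belongs to $\Sigma_R$, all hypotheses of Corollary~\ref{cor:genus} are met with $\gamma \vcentcolon= \genus(\Sigma_R) \in \{1,\dots,g\}$. The corollary then forces $\gamma = g$, giving the matching lower bound and completing the proof.

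The main obstacle I anticipate is the preliminary exclusion of $\gamma = 0$: neither the min-max construction nor the equivariant topological arguments of Section~\ref{subsec:genus} alone rule this out, and one genuinely needs the external classification from Brendle together with the properties $F(\Theta_g) > 1$, noncompactness, and passage through the origin. Once $\gamma \geq 1$ has been secured, Corollary~\ref{cor:genus} supplies the equivariant rigidity that immediately closes the gap to $\gamma = g$ in one step.
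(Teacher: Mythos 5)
Your proposal is correct and follows essentially the same route as the paper: Brendle's classification combined with the facts that $\Theta_g$ contains the origin and satisfies $F(\Theta_g)=W>1$ (Lemma~\ref{lem:multiplicity}) rules out genus zero, and Corollary~\ref{cor:genus} applied to $\Theta_g\cap\B_{r_g}$ from Corollary~\ref{cor:connected} then forces the genus to equal $g$. One minor slip: for $g=1$ the horizontal cylinders about $\xi_1$ or $\xi_2$ are also $\dih_{2}$-invariant, but this is harmless because no self-shrinking cylinder of radius $\sqrt{2}$ contains the origin, which is exactly the origin-containment argument the paper itself uses to exclude both the sphere and the cylinder.
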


\begin{proof}
Let the radius $r_g>0$ be as in Corollary \ref{cor:connected}. 
If we prove that $\genus(\Theta_g)\geq1$, then the claim follows by applying Corollary \ref{cor:genus} to $\Sigma\vcentcolon=\Theta_g\cap\B_{r_g}$ since $\Sigma\subset\B_{r_g}$ is a connected, properly embedded, $\dih_{g+1}$-equivariant surface containing the origin with $\genus(\Sigma)\leq g$.   

Brendle's \cite{Brendle2016} classification result implies that if $\genus(\Theta_g)=0$ then $\Theta_g$ is the horizontal plane because it contains the origin. This case however is excluded by Lemma \ref{lem:multiplicity}. 
\end{proof}

\section{Behaviour for high genus} 
\label{sec:high_genus}

The examples of complete self-shrinkers with high genus in \cite{Kapouleas2018,Nguyen2014} were found by desingularising the intersection of a sphere with a plane. 
While it remains open whether these surfaces are isometric to our surfaces $\Theta_g$ for sufficiently large $g$, we prove that our family $\{\Theta_g\}_{g\in\N}$ has the same asymptotic behaviour, i.\,e.~that it converges in the sense of varifolds to the union of a sphere and a plane as $g\to\infty$ as stated in Theorem \ref{thm:high_genus}.

\begin{proof}[Proof of Theorem \ref{thm:high_genus}]
We consider any subsequence of $\{\Theta_g\}_{g\in\N}$ without relabelling and recall that the Gaußian area of $\Theta_g$ is bounded from above uniformly in $g$ by Lemma~\ref{lem:width}. 
Given any $R_0>2$, there exists a fixed radius $R\geq R_0$ such that $\Theta_g$ intersects the sphere $\partial\B_R$ of radius $R$ transversally for any $g\in\N$. 
This follows from Sard's Theorem. 
In order to avoid having to estimate the mass of the boundary $\partial(\Theta_g\cap\B_R)$, we close the surface $\Theta_g\cap\B_R$ by adding a suitable subset $Q_R\subset\partial\B_R$ such that $\Omega_g^R\vcentcolon=\Theta_g\cap\B_R\cup Q_R$ is a Lipschitz surface without boundary. 
This does not necessarily preserve the dihedral symmetry but it can be done preserving the invariance with respect to the cyclic subgroup $\Z_{g+1}$ as in the proof of Corollary~\ref{cor:genus}. 
We interpret $\Omega_g^R$ as integral varifold in the Riemannian manifold $(\B_{2R},e^{-\frac{1}{4}\abs{x}^2}g_{\R^3})$ with mass bounded by $3+R^2e^{-\frac{1}{4}R^2}$ uniformly in $g$ and with boundary mass equal to zero. 
Allard's \cite{Allard1972} compactness theorem implies the existence of a further subsequence which converges in the sense of varifolds to some stationary integer varifold $\Omega_\infty^{R}$ as $g\to\infty$. 
Then, we restrict the elements $\Omega_g^{R}$ of the subsequence and their limit $\Omega_\infty^{R}$ back to the interior of $\B_R$ which preserves the convergence. 
 
\emph{Claim 1}. The support of $\Omega_\infty^{R}$ contains the intersection of $\B_R$ with the horizontal plane 
\begin{align*}
P\vcentcolon=\{x\in\R^3\mid x\perp(0,0,1)\}.
\end{align*}
\begin{proof}[Proof of Claim 1.]
Let $\mu_{\Omega_\infty^{R}}$ be the weight measure of the varifold $\Omega_\infty^{R}$ and let $x_0\in P\cap\B_R$. 
If $x_0$ is not in the support of $\mu_{\Omega_\infty^{R}}$ then there exists $\varepsilon>0$ such that $\mu_{\Omega_\infty^{R}}(\B_\varepsilon(x_0))=0$. 
However, there exists $g_\varepsilon\in\N$ such that for all $g\geq g_\varepsilon$ the union $\xi_1\cup\ldots\cup\xi_{g+1}$ of the horizontal lines fixed by the dihedral group $\dih_{g+1}$ intersects $\B_{\varepsilon/2}(x_0)$. 
Since $\xi_1\cup\ldots\cup\xi_{g+1}\subset\Omega_g^{R}$ we obtain some $\delta_\varepsilon>0$ such that $\mu_{\Omega_g^{R}}(B_\varepsilon(x))\geq\delta_\varepsilon$ uniformly in $g$, a contradiction. 
\end{proof}

Let $x\in\B_{R}$ be arbitrary but fixed. 
In the following, we understand all geometric quantities with respect to the 
Gaußian ambient metric $e^{-\frac{1}{4}\abs{x}^2}g_{\R^3}$ restricted to $\B_R$. 
Let $A_{g}$ denote the second fundamental form of the minimal surface $\Omega_g^R\subset\B_R$. 
Given $x\in\B_R$, the $\varepsilon$-regularity theorem of Choi--Schoen 
\cite[Proposition 2]{Choi1985} states that there exists $\varepsilon_0>0$ such that if $r\leq\varepsilon_0$ and if
\begin{align*}
\int_{\Omega_g^{R}\cap\B_r(x)}\abs{A_{g}}^2\,d\mu\leq\varepsilon_0
\end{align*}
then there exists a constant $C$ which depends only on the background geometry and can hence be chosen uniformly with respect to the genus $g$ such that 
\begin{align}\label{eqn:Choi-Schoen}
\max_{0\leq\sigma\leq r}\sigma^2\sup_{\B_{r-\sigma}(x)}\abs{A_{g}}^2\leq C. 
\end{align}
Complementary to the set of points where this result applies, we consider (cf.~\cite[§\,4.3]{Ketovera})
\begin{align}\label{eqn:blow-up-set}
\Lambda^{R}\vcentcolon=\Bigl\{x\in\B_R\;\Big\vert\;\inf_{r>0}\Bigl(\liminf_{g\to\infty}\int_{\Omega_g^R\cap\B_r(x)}\abs{A_{g}}^2\,d\mu\Bigr)\geq\varepsilon_0\Bigr\}.
\end{align}
The equivariance of $\Omega_g^{R}$ implies that $\Lambda^{R}$ is rotationally symmetric around the vertical axis $\xi_0$.  
Since by construction, $\Omega_g^{R_1}\subseteq\Omega_g^{R_2}\subset\Theta_g$ for any $0<R_1\leq R_2$ and any $g\in\N$ we also have $\Lambda^{R_1}\subseteq\Lambda^{R_2}$. 

\emph{Claim 2.} 
Let $x_0\in\Lambda^{R}\setminus\xi_0$ be arbitrary. 
Then the genus of $\Omega_g^R$ restricted to any neighbourhood of $x_0$ is unbounded as $g\to\infty$. 

\begin{proof}[Proof of Claim 2.]
Let the point $x_0\in\Lambda^{R}\setminus\xi_0$, the radius $r_0>0$ and the constant $\gamma\in\N$ be given. 
By Sard's Theorem, there exists $0<r<r_0$ such that $\partial\B_r(x_0)$ intersects $\Omega_g^R$ transversally for all $g$. 
In particular, the genus of $\Omega_g^R\cap\B_r(x_0)$ is well-defined. 
Given some $\gamma,g_0\in\N$, suppose $\genus(\Omega_g^R\cap\B_r(x_0))\leq\gamma$ for all $g\geq g_0$. 
Then, since area and mean curvature of $\Omega_g^R\cap\B_r(x_0)$ are bounded uniformly in $g$, Ilmanen's \cite[Lecture 3]{Ilmanen1998} localised Gauß--Bonnet estimate implies  
\begin{align}\label{eqn:20210304-1}
\sup_{g\geq g_0}\int_{\Omega_g^R\cap\B_{r/2}(x_0)}\abs{A_{g}}^2\,d\mu\leq C.
\end{align}
Let $S$ be the horizontal circle around $\xi_0$ through $x_0$. 
Then $S\subset\Lambda^{R}$ since $\Lambda^{R}$ is rotationally symmetric.   
Given any $n\in\N$ there exists a collection of points $x_1,\ldots,x_n\in S\cap\B_{r/2}(x_0)$ with pairwise distance $>2\delta>0$, 
depending only on $r$, $n$ and the radius of $S$. 
By definition \eqref{eqn:blow-up-set},
\begin{align}\label{eqn:20210304-2}
\sup_{g\geq g_0}\int_{\Omega_g^R\cap\B_{r/2}(x_0)}\abs{A_{g}}^2\,d\mu
\geq\sup_{g\geq g_0}\sum^{n}_{k=1}
\int_{\Omega_g^R\cap\B_{\delta}(x_k)}\abs{A_{g}}^2\,d\mu\geq \frac{n\varepsilon_0}{2}. 
\end{align}
Since $n\in\N$ is arbitrary, the estimates \eqref{eqn:20210304-1} and \eqref{eqn:20210304-2} are in contradiction and we obtain that there exists a subsequence $g\to\infty$ along which $\genus(\Omega_g^R\cap\B_r(x_0))\to\infty$. 
\end{proof}

\emph{Claim 3.} $\Lambda^{R}$ is contained in $P\cup\xi_0$, where $P$ denotes the horizontal plane and $\xi_0$ the vertical axis. 
Moreover, $\Lambda^{R}\cap P\setminus\xi_0$ consists of at most one circle around $\xi_0$. 
 
\begin{proof}[Proof of Claim 3.]
Towards a contradiction, suppose that there exists $x_0\in\Lambda^{R}\setminus(P\cup\xi_0)$. 
Due to the dihedral symmetry, we may assume that $x_0$ lies above $P$. 
Given $0<\varepsilon<\frac{1}{2}\dist(x_0,P\cup\xi_0)$ there exists $1\ll g\in\N$ (which we now fix) such that at least one connected component $Q_{g,\varepsilon}$ of $\Omega_g^R\cap\B_\varepsilon(x_0)$ satisfies $\genus(Q_{g,\varepsilon})\geq1$ by Claim 2. 
Let $O_{g,\varepsilon}$ be the orbit of $Q_{g,\varepsilon}$ under the action of the cyclic subgroup $\Z_{g+1}<\dih_{g+1}$. 
Then, $O_{g,\varepsilon}$ is either connected or $O_{g,\varepsilon}$ has $g+1$ connected components which are all isometric to $Q_{g,\varepsilon}$. 
The fact that $O_{g,\varepsilon}\subset\Omega_g^{R}$ can have at most genus $g$ excludes the second case.  
Hence, $O_{g,\varepsilon}$ is connected and since it contains at least two disjoint copies of $Q_{g,\varepsilon}$ by construction it has at least genus $2$. 

Let $0<r<\varepsilon$ such that the boundary of the domain $Y_r^R\vcentcolon=\{(y_1,y_2,y_3)\in\B_R\mid y_3\geq r\}$ intersects $\Omega_g^R$ transversally for all $g$ and let $\Sigma$ be the connected component of $\Omega_g^R\cap Y_r^R$ which contains $O_{g,\varepsilon}$. 
Since $\Sigma$ is $\Z_{g+1}$-equivariant and properly embedded in the convex domain $Y_r^R$, Corollary~\ref{cor:genus} applies and yields $\genus(\Sigma)=g$. 
However, since $\Sigma\subset\Omega_g^R$ is contained in the upper half space, the dihedral symmetry implies $\genus(\Omega_g^R)\geq2g$ in contradiction with $\genus(\Omega_g^R)\leq g$. 
This proves $\Lambda^{R}\subset P\cup\xi_0$. 

Being rotationally symmetric, $\Lambda^{R}\cap P$ is a union of circles. 
Suppose, $\Lambda^{R}\cap P\setminus\xi_0$ contains two circles with different radii $0<r_1<r_2<R$.  
Let $r\in\interval{r_1,r_2}$ such that $\Omega_g^R$ intersects the sphere $\partial\B_r$ of radius $r$ around the origin transversally. 
Then $\Omega_g^R\cap\B_r$ is properly embedded in $\B_r$ for all $g$, and, arguing similarly as above, it has a connected component  with full genus $g$. 
This implies $\genus(\Omega_g^R\setminus\B_r)=0$ in contradiction with Claim 2 applied to neighbourhoods of points in the outer circle. 
\end{proof}

\emph{Claim 4.} The limit $\Omega_{\infty}^{R}$ is smooth and embedded away from $\Lambda^{R}\cap P$ and $\Lambda^{R}\cap \xi_0$ is discrete. 

\begin{proof}[Proof of Claim 4.]
Let $x_0\in\B_R\setminus\Lambda^{R}$ be arbitrary. 
Then, as stated in \eqref{eqn:Choi-Schoen}, there exists $r>0$ such that the second fundamental form $A_{g}$ of $\Omega_g^{R}\cap\B_r(x_0)$ is bounded uniformly for all $g$ along a (further) subsequence. 
This implies (cf.~\cite{Langer1985}) that the convergence $\Omega_g^{R}\to \Omega_\infty^{R}$ is smooth in $\B_r(x_0)$.

Now let $x_0\in\Lambda^{R}\cap\xi_0\setminus P$ be arbitrary. 
Then there exist $0<r<\dist(x_0,P)$ and $g_0\in\N$ such that any connected component of $\Omega_g^{R}\cap\B_r(x_0)$ has genus $0$ or $1$ for all $g\geq g_0$. Otherwise, Corollary \ref{cor:genus} would imply $\genus(\Omega_g^{R}\cap\B_r(x_0))\geq g$ and hence $\genus(\Omega_g^{R})\geq2g$ by dihedral symmetry.  
Once the genus is uniformly bounded, White's result \cite[Theorem 1.1]{White2018} implies that the limit $\Omega_\infty^{R}\cap\B_r(x_0)$ is smooth and that the convergence is smooth away from a discrete set. 
\end{proof}

To summarise, there is a subsequence along which the convergence $\Omega_g^R\to\Omega_\infty^{R}$ is smooth away from $\Lambda^R$ and $\Lambda^R$ consists of at most one circle in the horizontal plane $P$ in addition to a discrete subset of $\xi_0$. 
Moreover, $\Omega_\infty^{R}$ is smooth away from $\Lambda^R\cap P$.  
If $\Lambda^{R_0}\cap P$ is not empty for some $R_0$, then 
$\Lambda^{R}\cap P=\Lambda^{R_0}\cap P=c$ for all $R\geq R_0$ since the set $\Lambda^{R}$ is monotone with respect to $R$ in the sense of inclusions. 
Otherwise, we set $c=\emptyset$. 
A diagonal argument leads to a further subsequence along which our complete self-shrinkers $\Theta_g$ converge locally smoothly away from $c$ to some limit $\Theta_\infty$ as $g\to\infty$. 
Moreover, $\Theta_\infty$ has the same symmetries as $\Omega_\infty^R$ and is embedded away from $c$. 

\emph{Claim 5.} 
If $R>0$ is sufficiently large then $c=\Lambda^{R}\cap P$ is nonempty with positive diameter and $\Theta_\infty$ is not just a multiple of the plane $P$. 

\begin{proof}[Proof of Claim 5.]
Towards a contradiction, suppose that $\Lambda^{R}\cap P\subset\{0\}$ for all $R$. 
Then Claim 3 implies that $\Lambda^{R}$ is a discrete set contained in the vertical axis $\xi_0$ for all $R$. 
Hence, $\Theta_\infty\setminus\{0\}$ is a smooth, embedded minimal surface in Gaußian space. 
Being rotationally symmetric, each connected component of $\Theta_\infty \setminus P$ that has $0$ in its closure has finite Euler characteristic. 
Hence, by the removable singularities result \cite{Gulliver1976} (see also \cite[Prop. 1]{Choi1985}), each of these components extends smoothly to the origin, lies on one side of $P$ (because $\Theta_\infty$ is embedded away from the origin) and touches $P$ only in the origin. 
This is impossible by the strong maximum principle, and therefore no such components exist.
By Claim 1 and the Frankel property, $\Theta_\infty$ must therefore be a multiple of the plane $P$.
The dihedral symmetry implies that the multiplicity $m$ must be odd. 
In fact $m=1$ since the Gaußian area of $\Theta_g$ is bounded from above by $1.02+\frac{4}{e}<3$ for all $g\in\N$ by Lemma~\ref{lem:width}. 

By Brakke's \cite{Brakke1978} work (cf.~\cite[p.\;54]{Colding2013}), the plane $P$ is the self-shrinker with least Gaußian area and there is a gap to the next lowest. 
Hence, if a subsequence of $\{\Theta_g\}_g$ converges locally smoothly to $P$ with multiplicity one, then $\Theta_g=P$ for sufficiently large $g$ which contradicts the fact that $\Theta_g$ has genus $g$ as shown in Lemma~\ref{lem:genus_Theta}.  
\end{proof}

\emph{Claim 6.} 
The blow-up of $\Theta_\infty$ around any point $x_0\in c$ is given by two orthogonal planes. 
In particular, $\Theta_\infty$ is the union of the plane $P$ with a smooth self-shrinker $S$. 

\begin{proof}[Proof of Claim 6]
Let $T_r\vcentcolon=\{x\in\R^3\mid\dist(x,c)<r\}$ be the tubular neighbourhood of radius $r>0$ around $c$. 
We recall that $\Theta_\infty$ is rotationally symmetric. 
For all sufficiently small $r>0$, the restriction $\Theta_\infty\cap T_r$ is a union of finitely many rotationally symmetric annuli which share $c$ as one of their boundary curves and whose other boundary curve lies on $\partial T_r$.  
For any $0<\varepsilon<r$ the convergence 
$\Theta_g\cap (T_r\setminus T_\varepsilon)\to\Theta_\infty\cap (T_r\setminus T_\varepsilon)$ is smooth as $g\to\infty$ along our subsequence.  
The dihedral symmetry and the fact that $P\subsetneq\Theta_{\infty}$ imply that $\Theta_\infty\cap (T_r\setminus T_\varepsilon)$ has at least $4$ connected components. 
We claim that there are exactly $4$ such components. 
 
In order to make Corollary \ref{cor:genus} applicable, we introduce the convex hull $\tilde{T_r}$ of $T_r$.  
As in the proof of Claim 3 we can show that for all sufficiently large $g$, the restricted surface $\Theta_g\cap\tilde{T_r}$ has one connected component $Q_g$  which has full genus $g$. 
Moreover, Lemma \ref{lem:genus} implies that once all the boundary components of $Q_g$ are closed up $\Z_{g+1}$-equivariantly by topological discs (cf.~proof of Corollary~\ref{cor:genus}), the resulting surface intersects the vertical axis $\xi_0$ exactly $4$ times. 
One of these intersections is located in the origin; thus $Q_g$ has at most $3$ boundary components on $\partial\tilde{T}_r$ which wind around $\xi_0$. 
If $\Theta_g\cap\tilde{T}_r$ has another connected component $Q_g'$ for infinitely many $g$ along our subsequence, then necessarily $\genus(Q_g')=0$ and \cite[Theorem 1.1]{White2018} implies that $\{Q_g'\}_{g}$ converges smoothly as $g\to\infty$ away from a discrete set. 
In particular, the limit cannot be singular along $c$. 
This however contradicts our choice of $r$. 
Consequently, $\Theta_\infty\cap\tilde{T}_r$ also has at most $3$ boundary components on $\partial\tilde{T}_r$ which wind around $\xi_0$. 
Therefore, $\Theta_\infty\cap (T_r\setminus T_\varepsilon)$ has at most $4$ and hence exactly $4$ connected components. 

It remains to prove the orthogonality along $c$.  
We recall that $\Theta_\infty$ is a stationary varifold in Gaußian space whose support is given by the union $P\cup S$, where $P$ is the horizontal plane and where the connected components of $S\setminus P$ are minimal surfaces of Gaußian space with boundary $c$. 
The blow-up of $\Theta_\infty$ around $x_0\in c$ is a stationary varifold $W$ in Euclidean space. 
Since $\Theta_\infty$ is rotationally symmetric and invariant under the rotation of angle $\pi$ around the axis through $x_0$ and the origin, we have that $W=X\times\R$.
The argument above shows that the profile $X$ consists of exactly $4$ rays $\xi_+,\zeta_+,\xi_-,\zeta_-$ emerging from the origin $x_0$. 
The rays $\xi_{\pm}$ corresponding to the horizontal plane $P$ form a straight line $\xi$ as shown in Figure~\ref{fig:blowup}. 
Stationarity implies that the configuration must be balanced, i.\,e.~the union of the remaining two rays $\zeta_{\pm}$ must again form a straight line $\zeta$. 
Since $X$ is symmetric with respect to $\xi$, the intersection of $\xi$ with $\zeta$ must be orthogonal. 
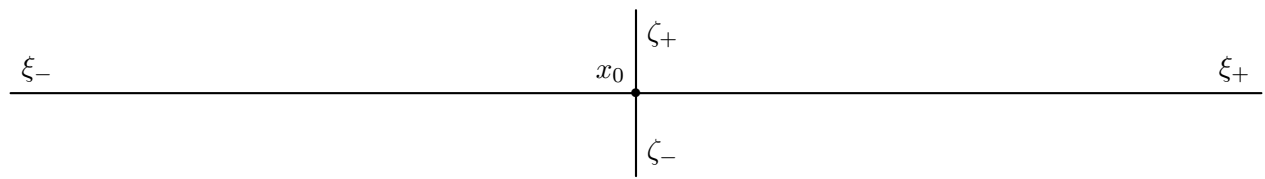
\begin{figure}\centering
\begin{tikzpicture}[line cap=round,line join=round,baseline={(0,0)},thick]
\pgfmathsetmacro{\xmax}{\textwidth/2.005cm}
\pgfmathsetmacro{\ymax}{0.95}
\draw(-\xmax,0)node[above right]{$\xi_-$}--(\xmax,0)node[above left]{$\xi_+$};
\draw(0,0)node{$\scriptstyle\bullet$}node[above left]{$x_0$};
\draw(0,-\ymax)node[above right,inner sep=0]{~$\zeta_-$}--(0,\ymax)node[below right,inner sep=0]{~$\zeta_+$};
\end{tikzpicture}
\caption{Notation for the profile $X$ of the blow-up around $x_0$.}%
\label{fig:blowup}%
\end{figure}
Therefore, the intersection of $S$ with $P$ is orthogonal. 
Hence, $S$ is of class $C^1$ across $P$. 
Standard regularity theory yields that $S$ is in fact a smooth, embedded self-shrinker without boundary. 
\end{proof}

\emph{Conclusion.} 
Claim 6 implies that $\Theta_\infty=P\cup S$, where $S$ is a smooth, embedded, rotationally symmetric self-shrinker intersecting the plane $P$ orthogonally in $c$. 
Hence, the profile $\sigma\vcentcolon=S\cap\{x_2=0\leq x_1\}$ is a smooth, connected curve intersecting the $x_1$-axis exactly once. 
In particular, $\sigma$ can not be closed. Therefore, $S$ has genus zero. 
By Brendle's \cite{Brendle2016} classification result, $S$ must be a (self-shrinking) sphere or cylinder since the horizontal plane with multiplicity has already been excluded in Claim 5. 
The upper Gaußian area bound in Lemma~\ref{lem:width} implies $F(P\cup S)\leq 1.02+\frac{4}{e}$, which is slightly below the Gaußian area $1+\sqrt{2\pi/e}$ of the union of $P$ with a self-shrinking cylinder (cf.~Table~\ref{table:entropy}). 
Consequently, $S$ is a self-shrinking sphere and $\Theta_\infty$ is the union of a plane and a sphere as claimed. 
This completes the proof since the initial subsequence of $\{\Theta_g\}_{g\in\N}$ was chosen arbitrarily.
\end{proof}

\section{Visualisations and numerical results}\label{sec:visualisation}

In this section, we display visualisations of self-shrinkers $\Theta$ for mean curvature flow in $\R^3$. 
Each image is the result of a numerical simulation with Brakke's \cite{Brakke1992} surface evolver. 
If $\Theta$ is noncompact, then we can only simulate a compact section of the surface in which case we restrict the simulation to the ball $\B_8$ of radius $8$ around the origin. 
The simulation starts from a compact, triangulated surface $\Sigma$ with the expected symmetry and topology. 
In the case that $\Theta$ is noncompact, its approximation $\Sigma\subset\B_8$ has nonempty boundary on $\partial\B_8$, where we impose a free boundary condition. 
Then we use Brakke's program to evolve $\Sigma$ towards a minimal surface with respect to the Gaußian background metric. 
The free boundary condition, i.\,e.~the requirement that the resulting surface meets the sphere $\partial\B_8$ orthogonally, is natural in the setting of self-shrinkers with asymptotically conical ends. 
Maintaining a good triangulation during the evolution is delicate because triangles far from the origin have extremely small Gaußian area and tend to degenerate. 
We designed a custom made algorithm to keep the triangulation homogeneous in order to obtain ``smooth'' visualisations. 
The algorithm directly outputs approximate values for the Gaußian area of the simulated surfaces. 
We collect some of these values in Table \ref{table:entropy} together with the Gaußian areas of the self-shrinkers with genus zero as a reference. 
The Gaußian area of Angenent's \cite{Angenent1992} torus has been computed numerically in \cite{Berchenko-Kogan2021} and this value is slightly larger than what we obtain for $F(\Theta_1)$. 

\begin{figure} 
\begin{minipage}{0.49\textwidth}
\includegraphics[width=\textwidth]{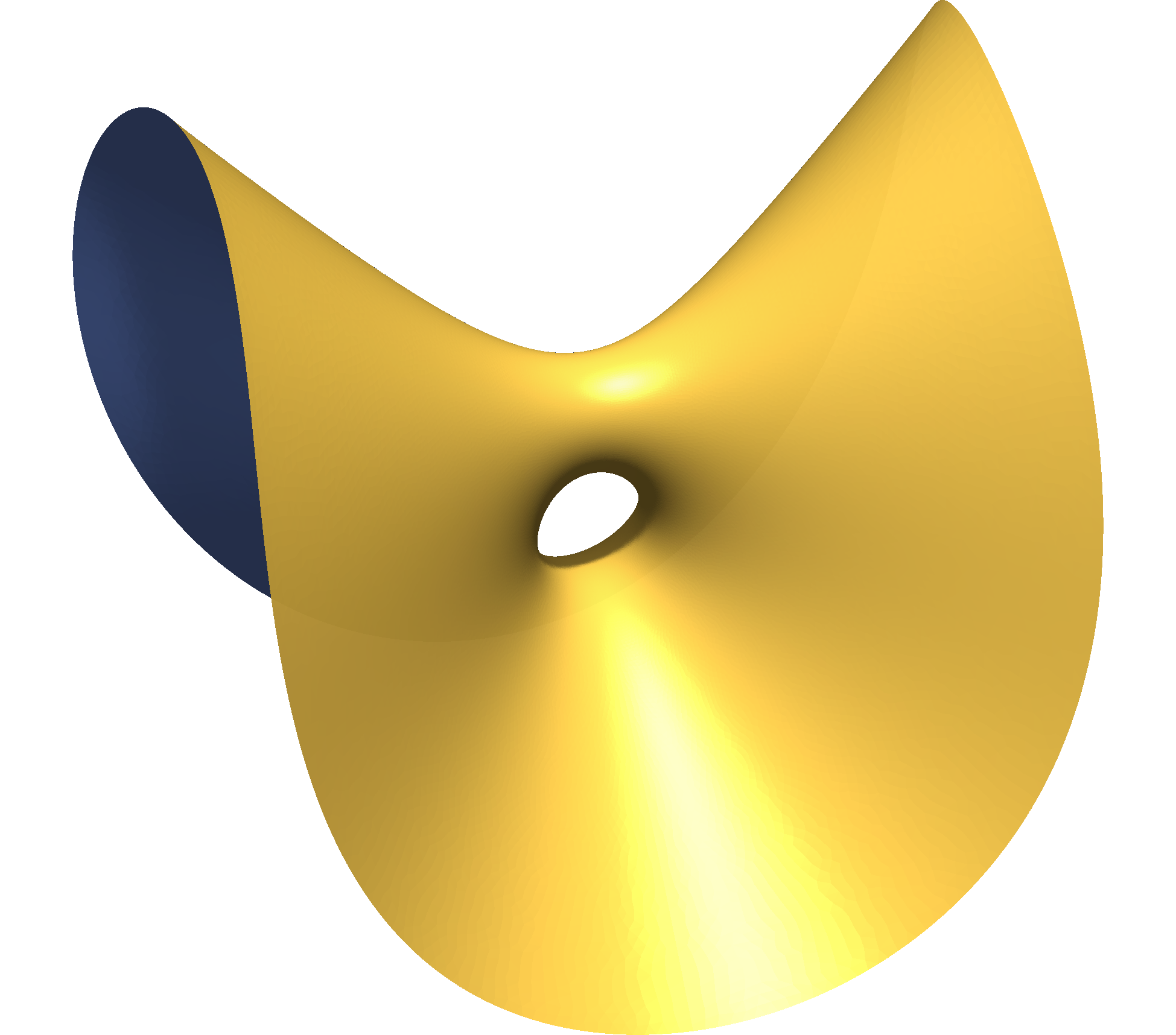}
\end{minipage}
\hfill
\begin{minipage}{0.49\textwidth}
\includegraphics[width=\textwidth]{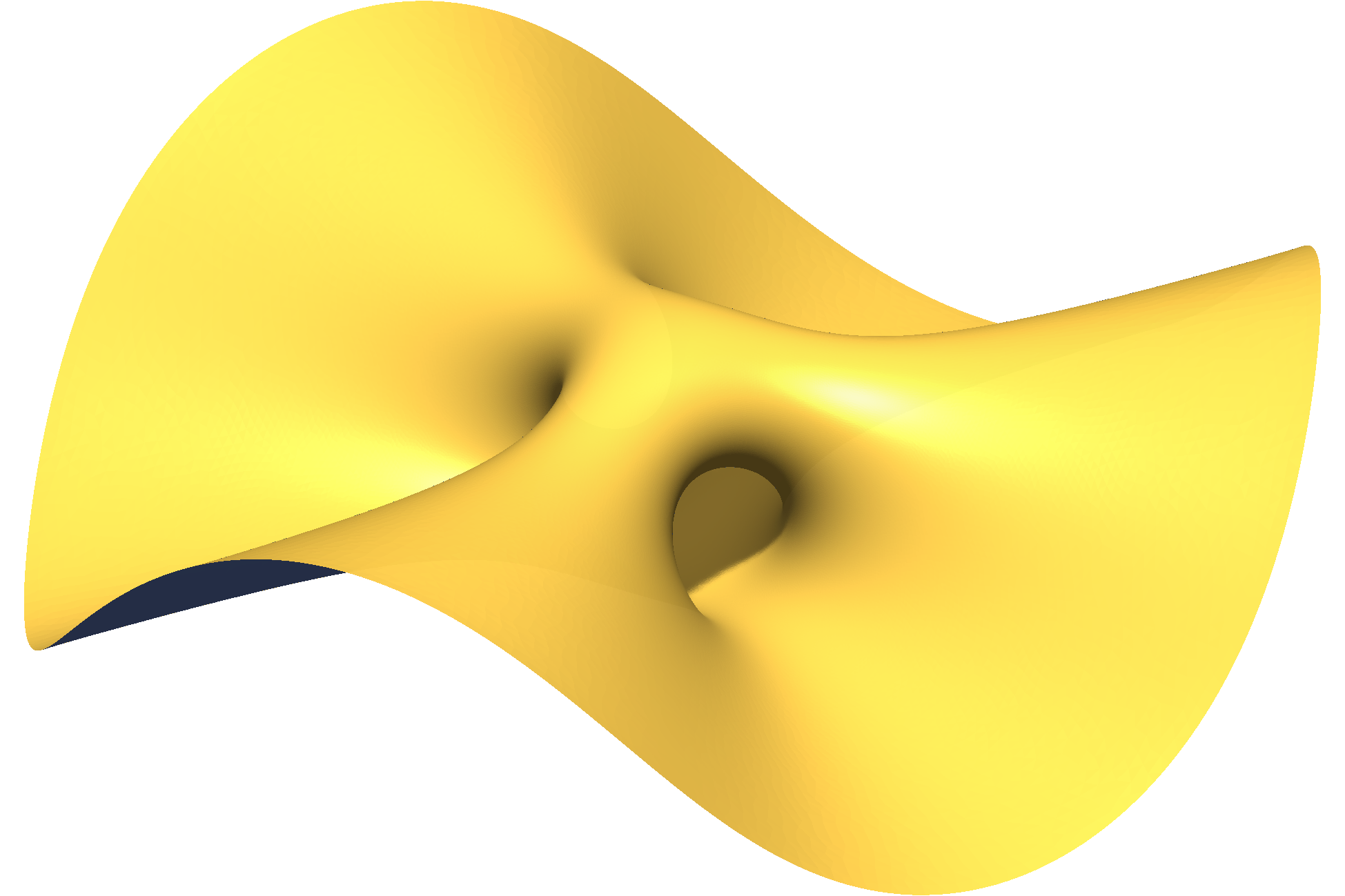}
\\ 
\includegraphics[width=\textwidth]{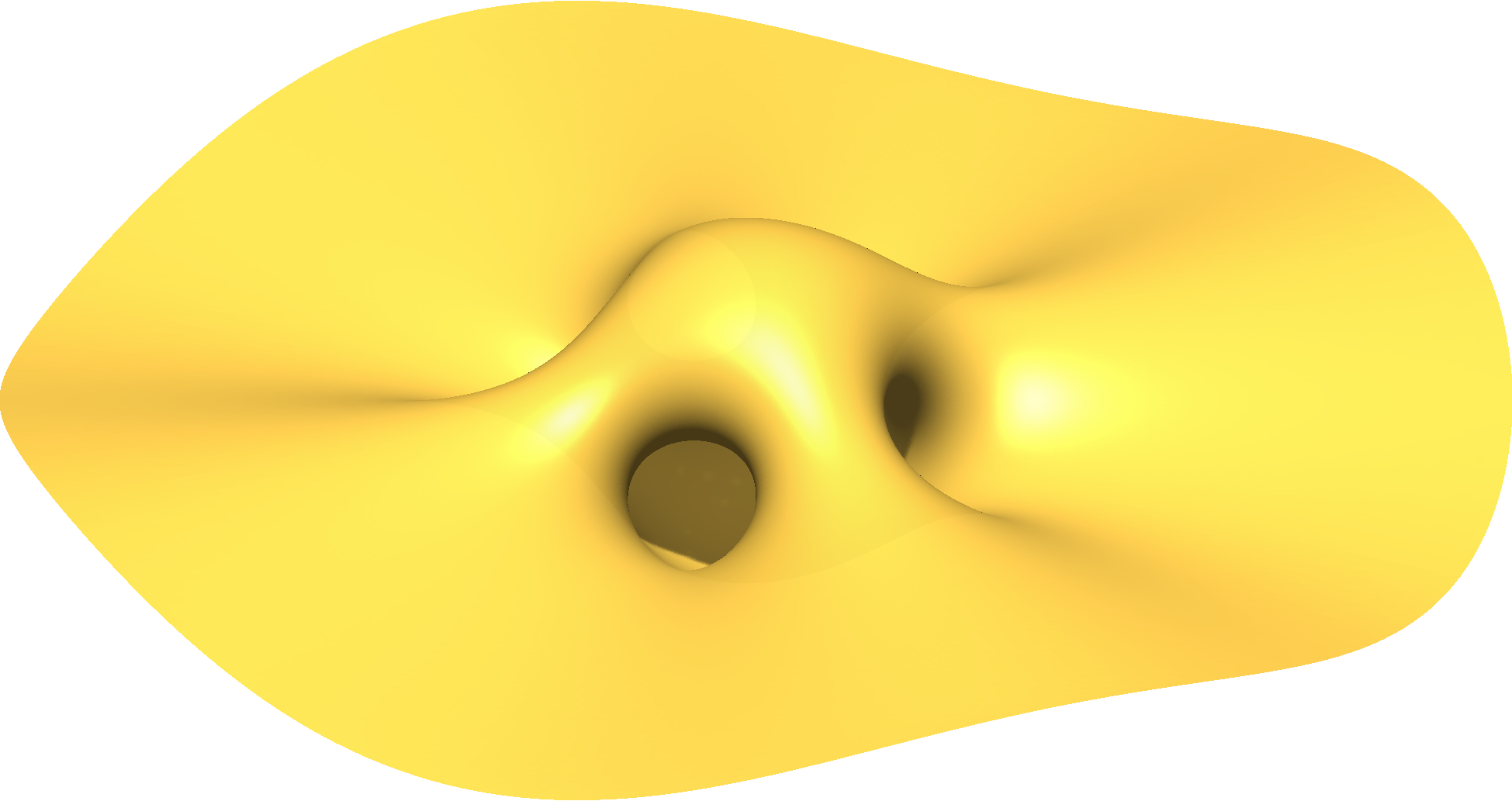}
\end{minipage}
\caption{Self-shrinkers of genus $g\in\{1,2,3\}$ with one end.} 
\label{fig:one-end}%
\end{figure}  	

Figures \ref{fig:one-end} and \ref{fig:half-end} visualise the expected shapes of the self-shrinkers $\Theta_g$ constructed in Theorem~\ref{thm:main}.  
The simulated surfaces feature reflection symmetries along vertical planes in addition to their dihedral symmetry.  
This indicates that the full symmetry group of $\Theta_g$ is actually larger than the dihedral group $\dih_{g+1}$ as remarked at the beginning of Section~\ref{sec:existence}.   
In Figure \ref{fig:2ends} (top images), we visualise the expected shapes of the self-shrinkers $\Phi_{2n-1}$ from Conjecture~\ref{conj:twoends} for $n\in\{7,24\}$. 
We compare them to simulations of the compact self-shrinkers $\Sigma_{2n}$ (bottom images) which M\o{}ller \cite{Moeller2014} constructed for sufficiently large genus $g=2n$ by desingularising the union of Angenent's torus and the sphere.  
We observe that $\Phi_{2n-1}$ and $\Sigma_{2n}$ are both invariant under the action of the dihedral group $\dih_n$. 
Moreover, if the outermost intersection of $\Sigma_{2n}$ with the horizontal plane is moved to infinity, then the resulting surface is isotopic to $\Phi_{2n-1}$. 
This illustrates that if one would attempt to construct either $\Phi_{2n-1}$ or $\Sigma_{2n}$ via equivariant min-max methods, it would be especially delicate to control the topology of the resulting surface.

\clearpage

\begin{figure}[t!] 
\begin{minipage}{0.5333\textwidth}
\includegraphics[width=\textwidth]{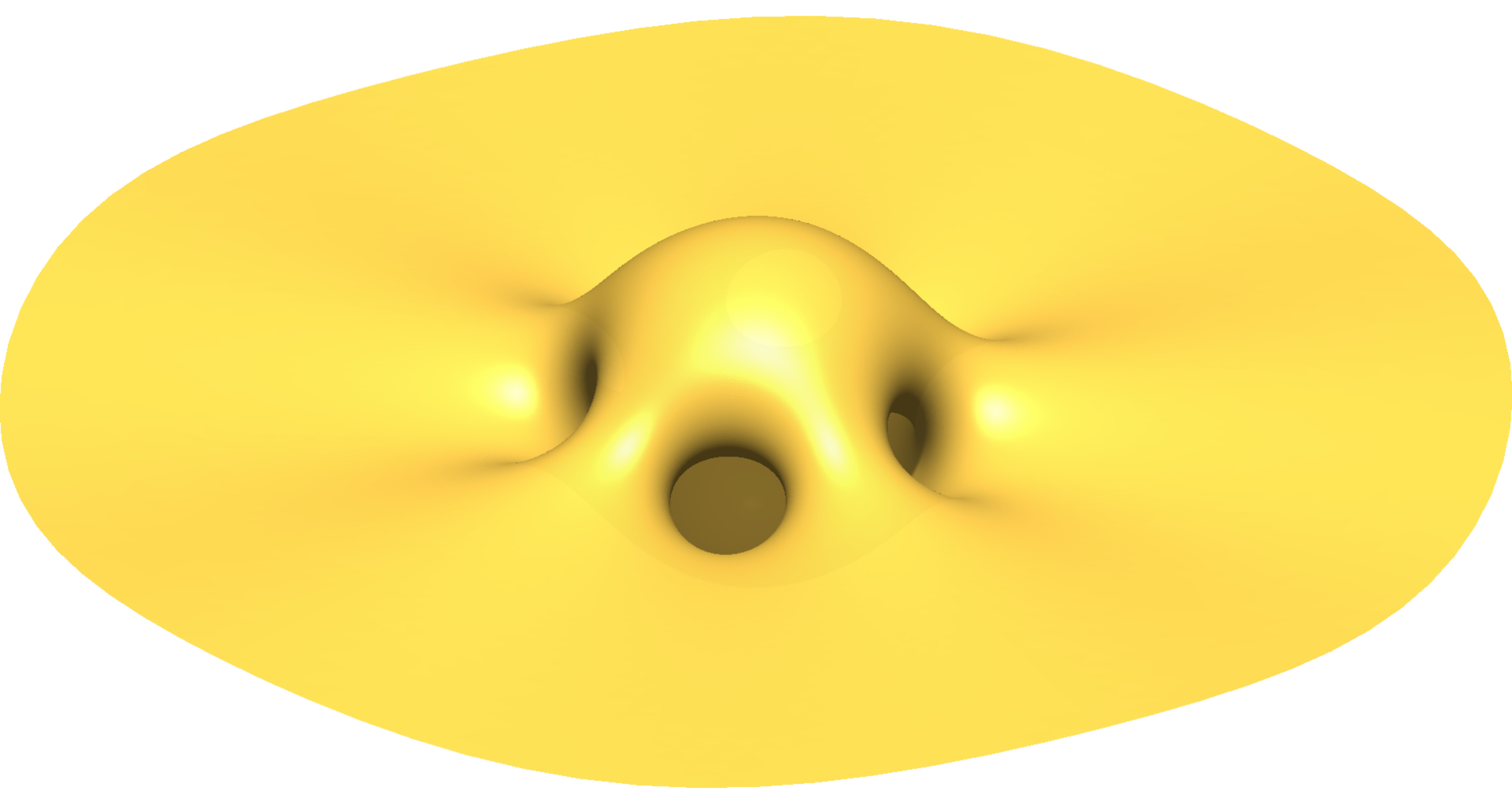}\\[1ex] 
\includegraphics[width=\textwidth]{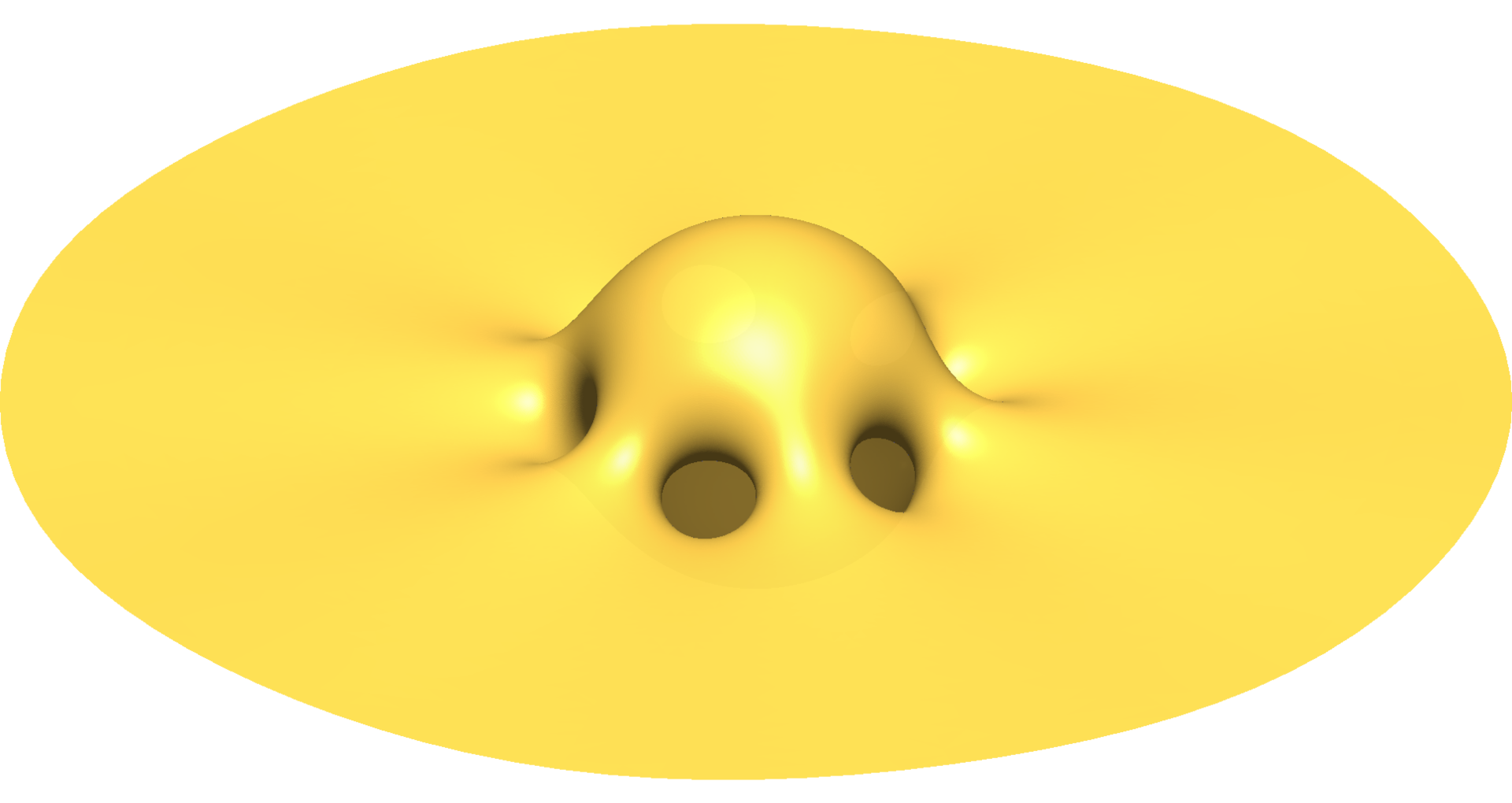}\\[1ex] 
\includegraphics[width=\textwidth]{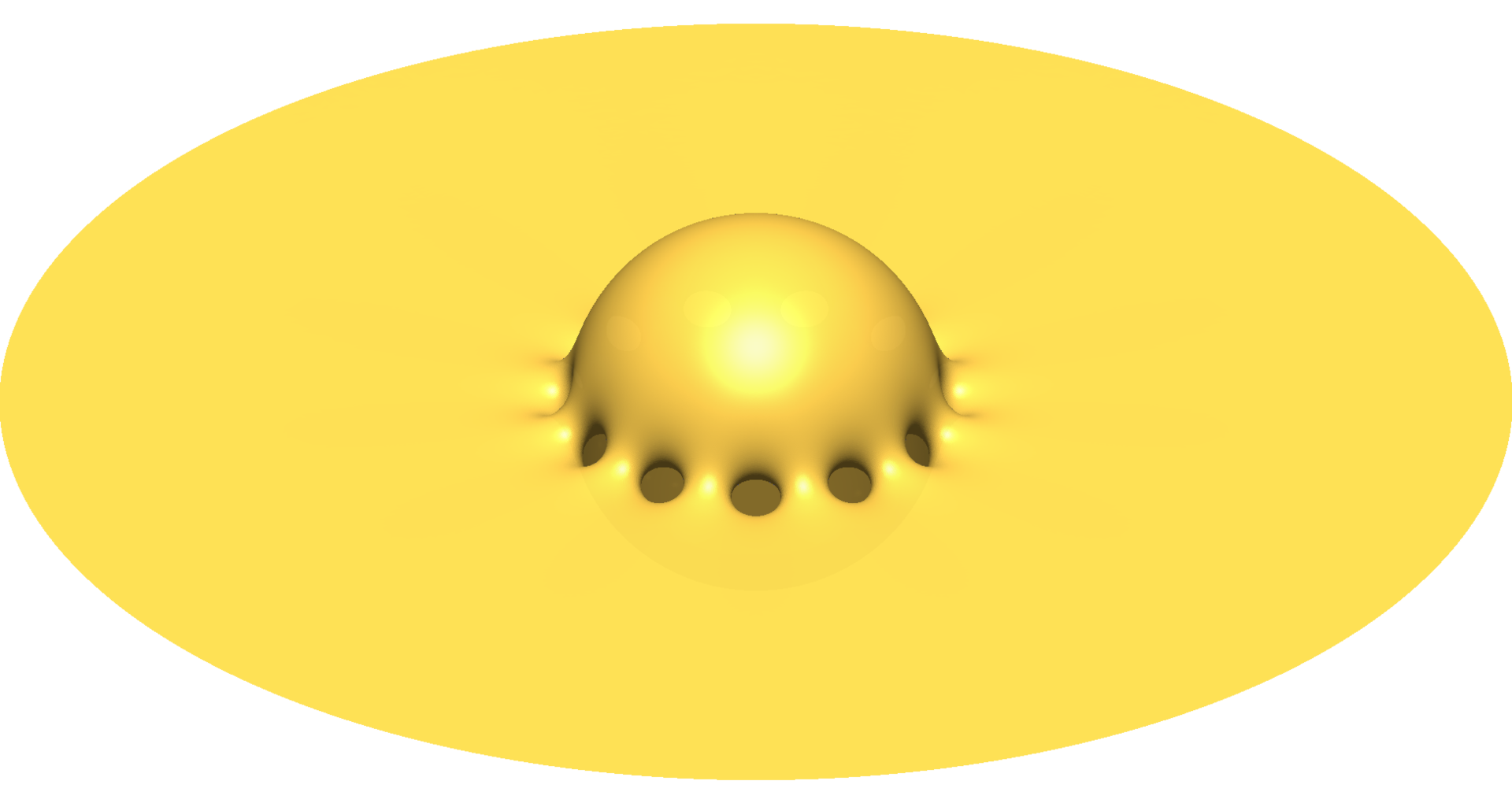} 
\end{minipage}
\hfill
\begin{minipage}{0.4583\textwidth}
\includegraphics[width=\textwidth]{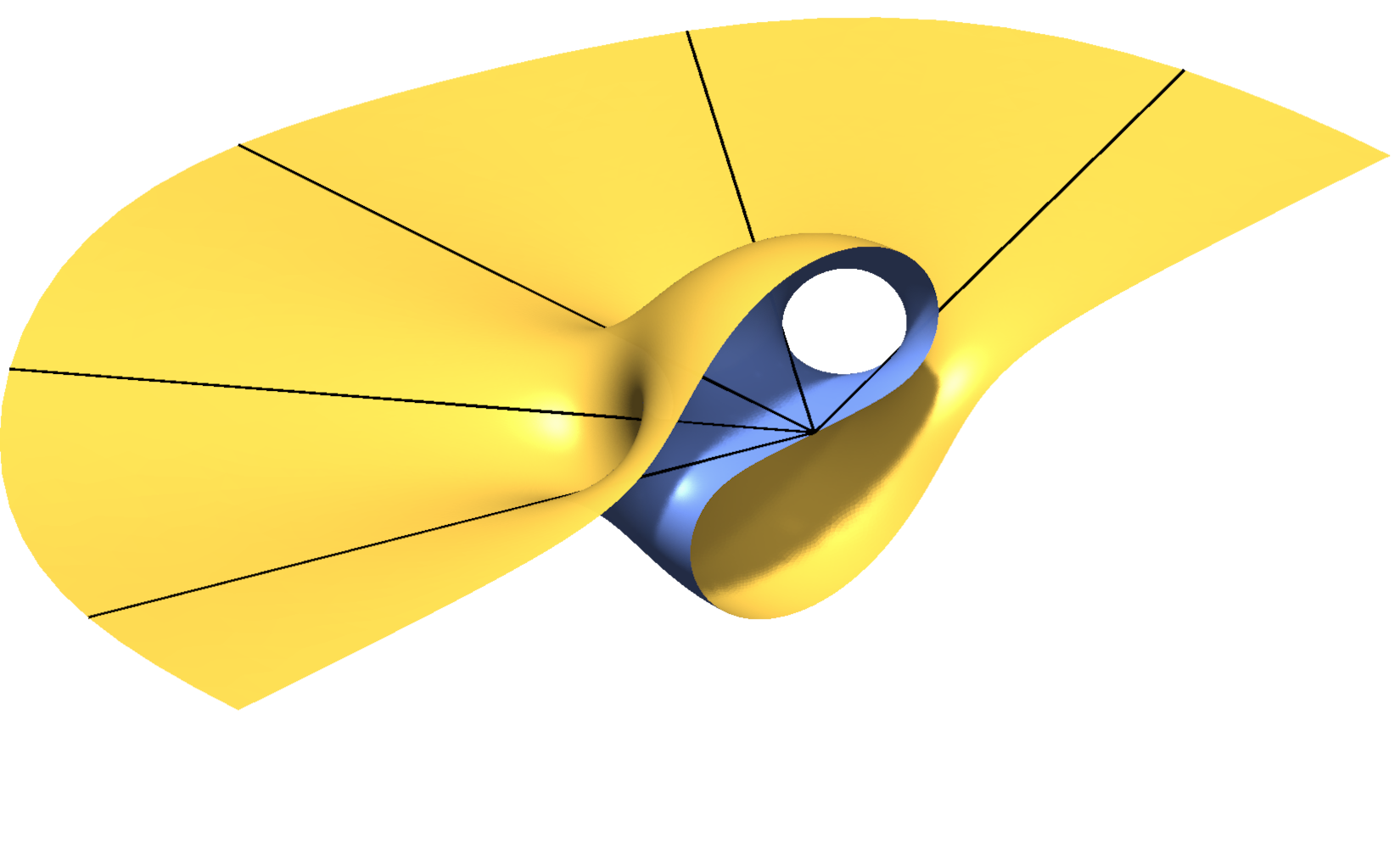}\\[1ex]
\includegraphics[width=\textwidth]{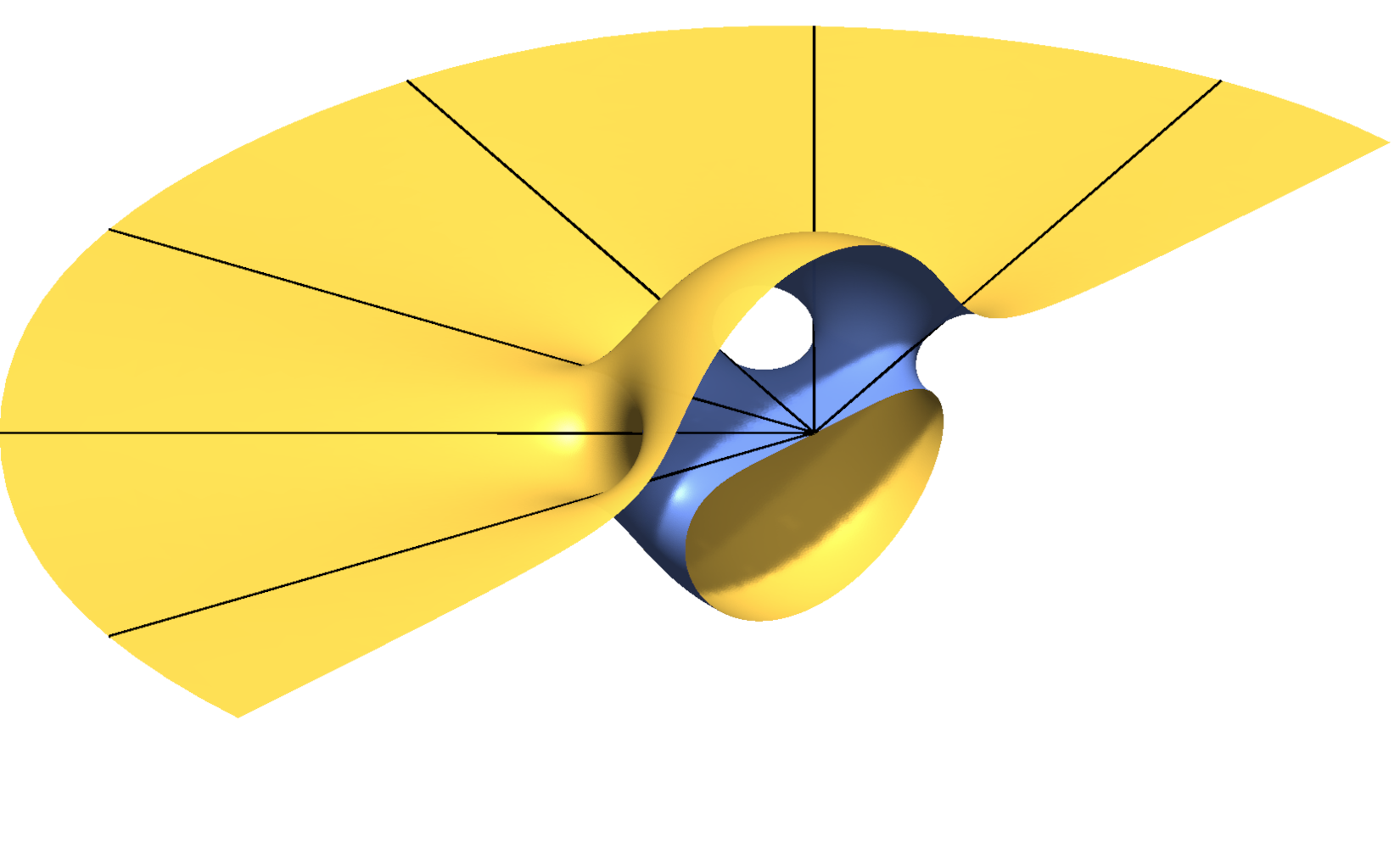}\\[1ex] 
\includegraphics[width=\textwidth]{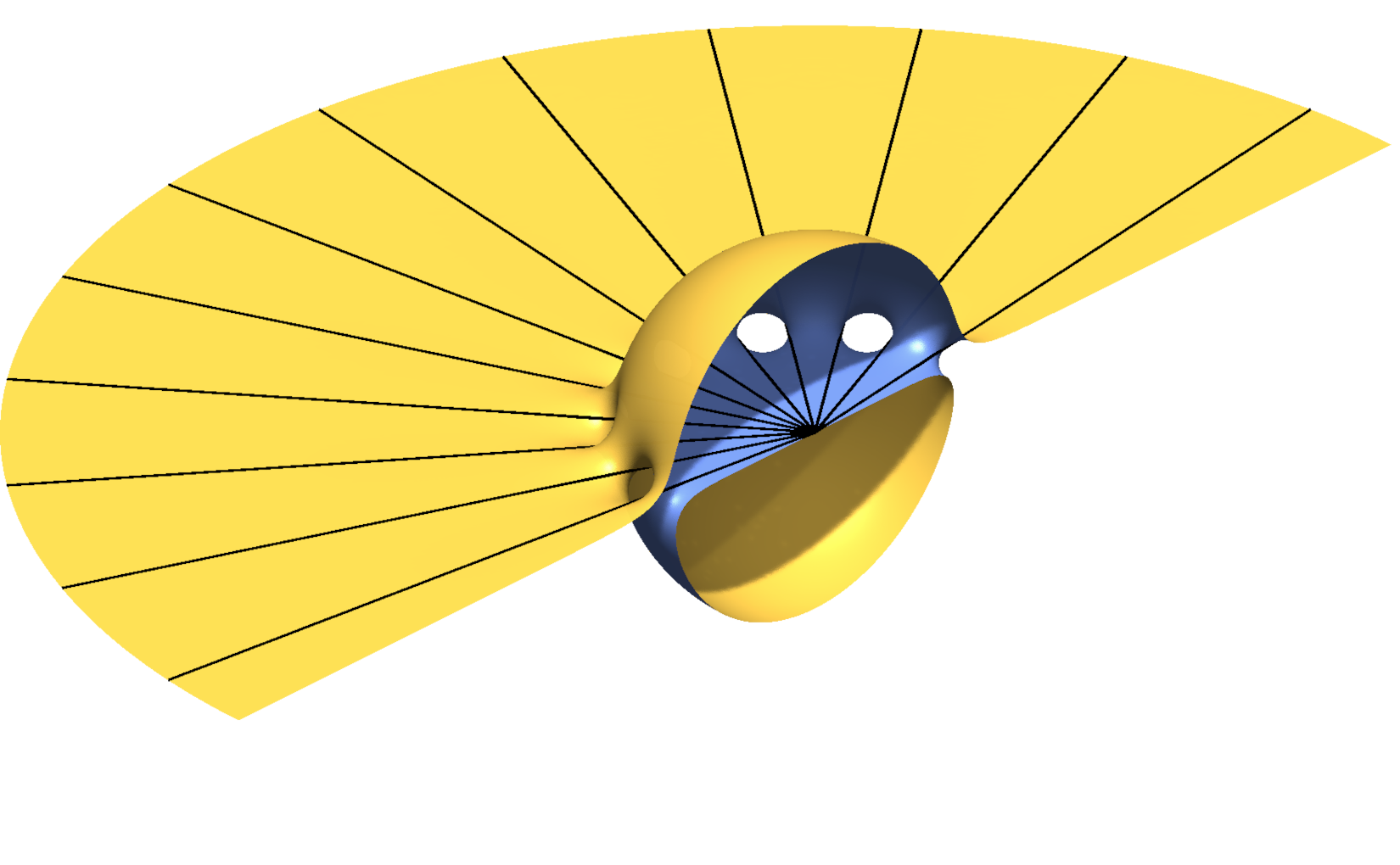} 
\end{minipage}
\caption{Self-shrinkers of genus $g\in\{4,5,11\}$ with one end and their vertical cuts containing the horizontal axes $\xi_1,\ldots,\xi_{g+1}$.  
} 
\label{fig:half-end}%
\end{figure} 

\begin{table} 
\caption{Approximate values for the Gaußian area of certain self-shrinkers.}
\label{table:entropy}
\smallskip\centering
\renewcommand{\arraystretch}{1.2}%
\begin{tabular}{
>{\raggedleft\arraybackslash}p{0.25\textwidth-2\tabcolsep}|
>{\raggedleft\arraybackslash}p{0.25\textwidth-2\tabcolsep}
}
{self-shrinker in $\R^3$} & {Gaußian area $F(\cdot)$}  
\\\hline\hline 
flat plane  & $1\hphantom{.0000}$ 
\\\hline
sphere of radius $2$ & $4/e\approx 1.4715$
\\\hline
cylinder of radius $\sqrt{2}$ &  $\sqrt{2\pi/e}\approx 1.5203$
\\\hline
$\mathrlap{\Theta_1}\qquad$ & $1.7143$
\\\hline
Angenent's torus & $1.8512$ 
\\\hline
$\mathrlap{\Theta_2}\qquad$ & $2.0433$
\\\hline
$\mathrlap{\Theta_3}\qquad$ & $2.1817$
\\\hline
$\mathrlap{\Theta_{11}}\qquad$ & $2.3854$
\end{tabular}
\end{table}

\clearpage

\begin{figure} 
\centering
\includegraphics[width=0.499\textwidth]{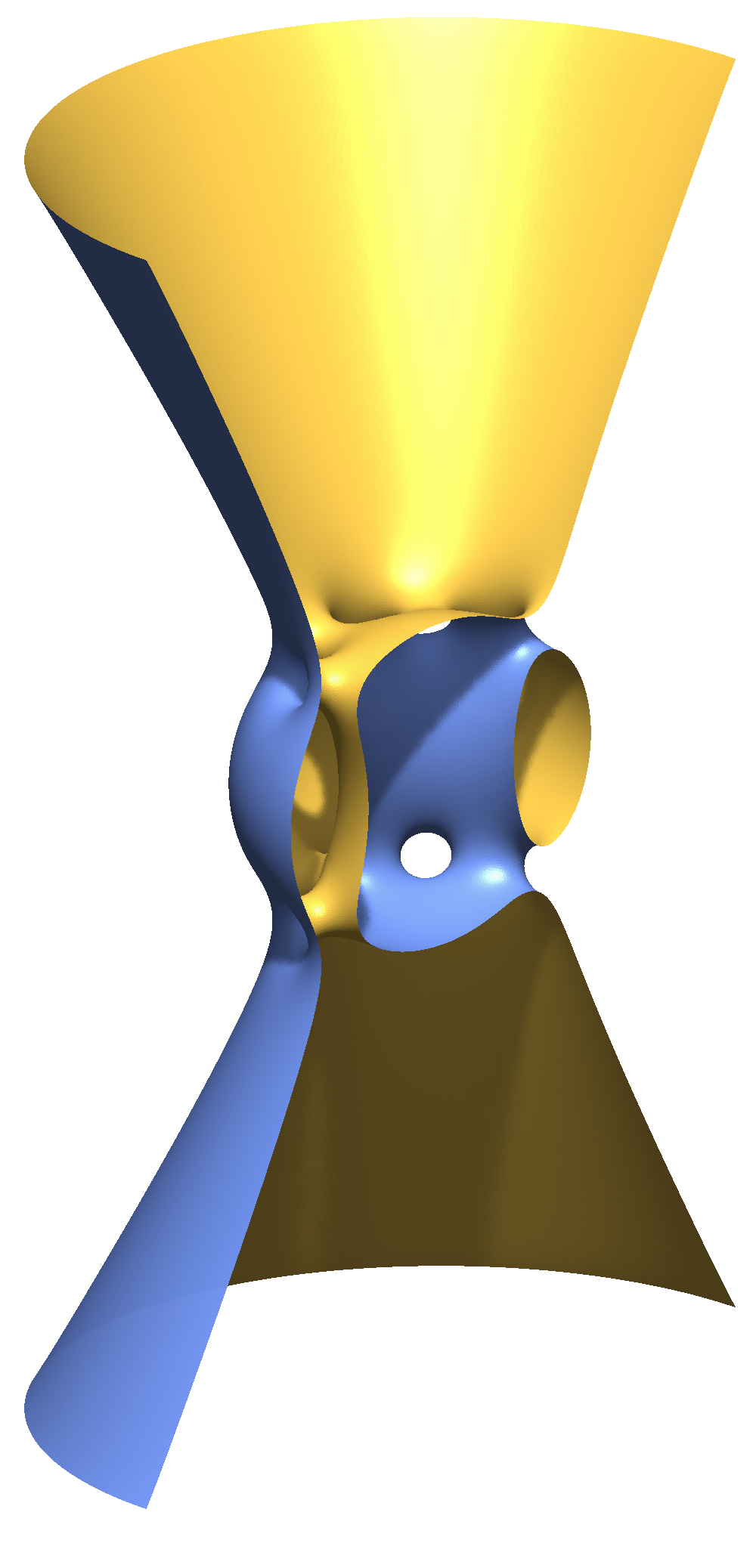}%
\hfill%
\includegraphics[width=0.499\textwidth]{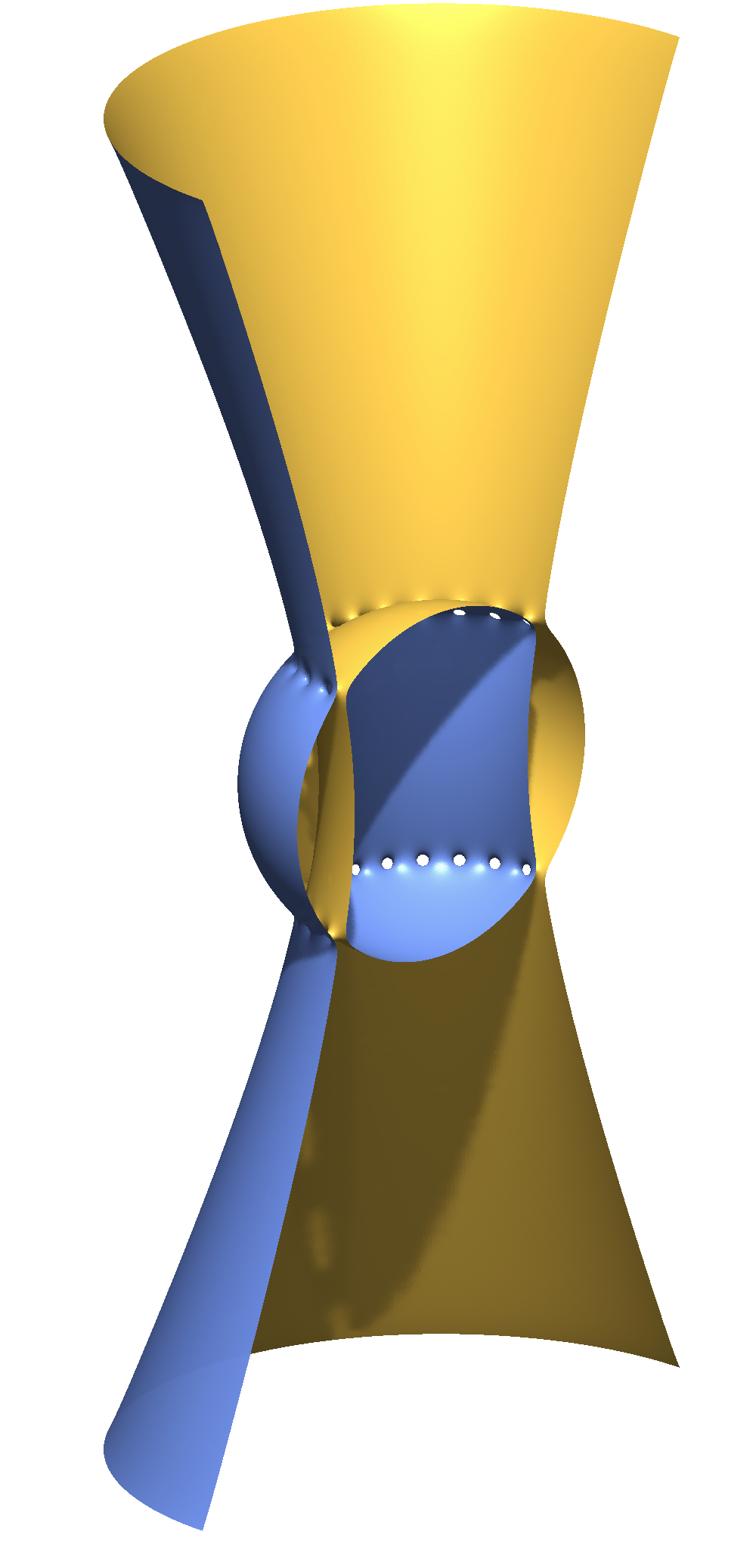}%
\\[-4ex]
\includegraphics[width=0.499\textwidth]{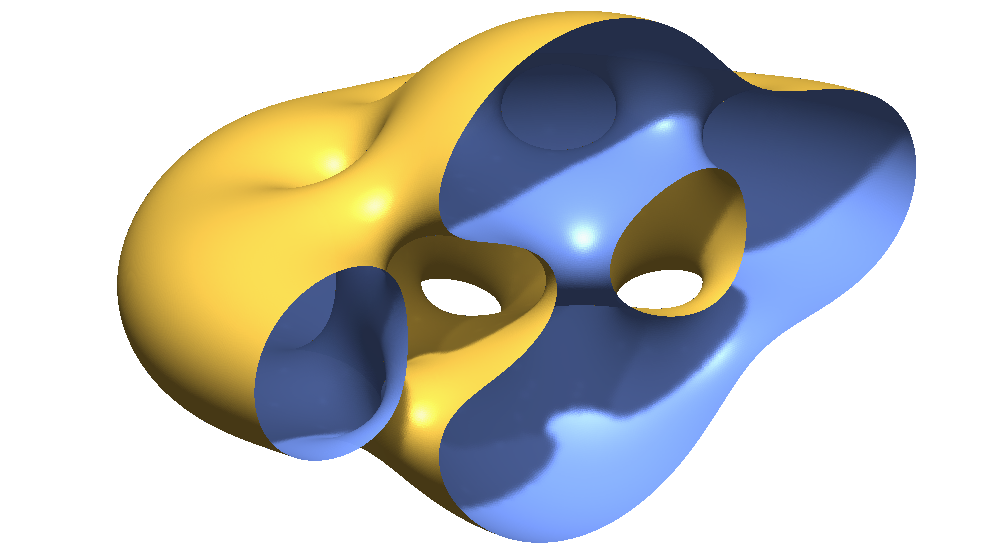}%
\hfill%
\includegraphics[width=0.499\textwidth]{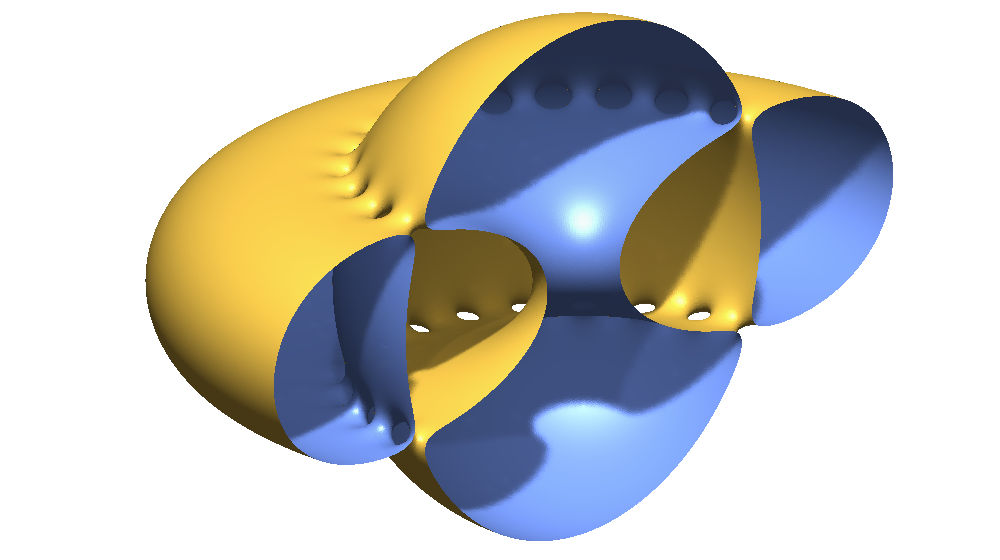}%
\caption{Top: Vertical cut through self-shrinkers of genus $13$ and $47$ with two ends. \newline
Bottom: Vertical cut through compact self-shrinkers of genus $14$ and $48$.}
\label{fig:2ends}%
\end{figure}


\clearpage
 
\setlength{\parskip}{1ex plus 1pt minus 1pt}
\bibliography{MCF-bibtex}

\printaddress

\end{document}